\newcommand\be{\begin{eqnarray}}
\newcommand\ee{\end{eqnarray}}
\newcommand{\R}{{\mathbb R}}
\theoremstyle{plain}
\newtheorem{proposition}{Proposition}[section]
\newtheorem{definition}{Definition}[section]
\begin{document}

\begin{frontmatter}

%%  "Title of the Paper"
\title{Dynamics of Cayley Forms}

\begin{aug}
%%  \author{\fnms{John} \snm{Smith}\thanksref{t2}\ead[label=e1]{smith@foo.com}\ead[label=e2,url]{www.foo.com}}
%%  \thankstext{t2}{The author is supported by ...}
%%  \address{line 1\\ line 2\\ \printead{e1}\\\printead{e2}}

\author{\fnms{Kirill} \snm{Krasnov}\ead[label=e1]{kirill.krasnov@nottingham.ac.uk, ORCID: 0000-0003-2800-3767}}
\address{School of Mathematical Sciences, University of Nottingham, Nottingham, NG7 2RD, UK\\\printead{e1}}
%\author{\fnms{???} \snm{???}\thanksref{t2}\ead[label=e2]{???}}
%\address{???\\\printead{e2}}
%\and
%\author{\fnms{???} \snm{???}%
%        \ead[label=e3]{???}%
%        \ead[label=u1,url]{???}}
%\address{???\\\printead{e3}\\\printead{u1}}
%\thankstext{t2}{The author is supported by ...}
\end{aug}
%%  History:
%\received{\sday{3} \smonth{1} \syear{2022}}

\begin{abstract}\noindent The most natural first-order PDE's to be imposed on a Cayley 4-form in eight dimensions is the condition that it is closed. As is well-known, this implies  integrability of the ${\rm Spin}(7)$-structure defined by the Cayley form, as well as Ricci-flatness of the associated metric. In this work, we investigate the natural second-order conditions that can be imposed. We start at the linearised level, and construct the most general diffeomorphism-invariant second order in derivatives Lagrangian that is quadratic in the perturbations of the Cayley form, finding a two-parameter family. We then describe a non-linear completion of the linear story. We parametrise the intrinsic torsion of a ${\rm Spin}(7)$-structure by a 3-form, and show that this 3-form is completely determined by the exterior derivative of the Cayley form. The space of 3-forms splits into two ${\rm Spin}(7)$ irreducible components, and there is a two-parameter family of diffeomorphism-invariant Lagrangians quadratic in the torsion, matching the linearised story. We then describe a first-order in derivatives version of the action functional, which depends on the Cayley 4-form and auxiliary 3-form as independent variables. 

Our construction yields two distinguished natural Lagrangians. One of them is selected by the condition that the Euler-Lagrange equation for the auxiliary 3-form requires it to coincide with the torsion 3-form, leading to a canonical torsion-squared functional whose field equations we analyse. In the second, a specific linear combination of the two torsion-squared invariants is shown to integrate to the scalar curvature, and the resulting Euler–Lagrange equations are precisely the Einstein equations for the associated metric. For all theories in the considered class, the field equations are expressed entirely in terms of the exterior derivative, without explicit reference to the Levi-Civita connection.
\end{abstract}

%\begin{keyword}[class=AMS]
%\kwd[Primary ]{}
%\kwd{}
%\kwd[; secondary ]{}
%\end{keyword}

%%  Upper case for every keyword
%\begin{keyword}
%\kwd{}
%\kwd{}
%\end{keyword}

%\tableofcontents

\end{frontmatter}

\section{Introduction}

A ${\rm Spin}(7)$-structure on an 8-dimensional manifold is defined to be a 4-form of a special algebraic type. Such a 4-form is known as a Cayley form, and its ${\rm GL}(8,\R)$ stabiliser is ${\rm Spin}(7)$. An 8-manifold admits a ${\rm Spin}(7)$-structure if it is spin. However, since most of the considerations in this paper are local, we do not need to concern ourselves with  assumptions about $M$. 

As is well-known since \cite{Fernandez}, a ${\rm Spin}(7)$-structure is integrable if and only if the associated Cayley 4-form $\Phi$ is closed $d\Phi=0$. This in turn implies that the metric determined by $\Phi$ is Ricci-flat. It is clear that $d\Phi=0$ gives the most geometrically motivated set of first-order PDE's on the Cayley form. In this paper we address the question of what the most natural second-order PDE's are. We describe a certain construction, inspired by the Plebanski formalism \cite{Plebanski:1977zz}, see also \cite{Bhoja:2024xbe} for a recent description most closely aligned with the motivations of the present paper. The result of the construction is a two-parameter family of action functionals for $\Phi$, whose Euler-Lagrange equations are a set of second-order PDE's that possess some desirable properties. In particular, they are constructed solely from the operator of the exterior differentiation on forms, so one never needs to know the  covariant derivative of the metric determined by $\Phi$ to write them down. 

Our construction starts with the action
\be\label{action-intr-kappa}
S_\kappa[\Phi,C] = \int_M \Phi\wedge (dC - 6 C\wedge_\Phi C) + \frac{\kappa}{6}  (C)^2 v_\Phi + \frac{\lambda}{6} v_\Phi + \text{constr.}
\ee
Here $\Phi\in \Lambda^4(M)$ is a Cayley form, and we have included in the action a set of constraint terms whose purpose is to guarantee that $\Phi$ is of the correct algebraic type. These depend solely on $\Phi$ as well as some necessary Lagrange multipliers variation with respect to which imposes the constraints. An easy comparison between the dimension of the space of 4-forms ${\rm dim}(\Lambda^4) = 70$ and the dimension of the orbit ${\rm dim}( {\rm GL}(8,\R)/{\rm Spin}(7)) = 43$ shows that there are 27 independent constraints to be satisfied. We will never need to specify these constraints explicitly, as only the variation of these terms with respect to $\Phi$ matters for the Euler-Lagrange equations, and this can be determined by a different argument, see below. The object $C\in \Lambda^3(M)$ is what we refer to as the auxiliary 3-form. The term $\lambda v_\Phi$ in (\ref{action-intr-kappa}) is a 'cosmological constant' term, with $\lambda\in \R$ being a parameter and $v_\Phi$ being the volume form for $\Phi$, which can be taken to be $v_\Phi = (1/14) \Phi\wedge \Phi$. The quantity $\kappa\in \R$ is another parameter. Finally, $C\wedge_\Phi C$ is a 4-form constructed from two copies of $C$, as well as the (inverse) metric $g^{ab}$ determined by $\Phi$. In index notation that we will be using in this article, it is given by 
\be
(C\wedge_\Phi C)_{abcd} := C_{[ab|p|} C_{cd]q} g^{pq}.
\ee

The Euler-Lagrange equations for $C$ are algebraic, and determine $C$ in terms of the intrinsic torsion of the ${\rm Spin}(7)$-structure, or equivalently in terms of the exterior derivative of $\Phi$, see below. After this solution is substituted back into the action, one gets functional that depends solely on $\Phi$, and is given by a linear combination of the squares of the irreducible components of intrinsic torsion
\be\label{action-torsion-intr}
S_\kappa[\Phi] = \frac{1}{6} \int_M \left( \frac{(6+\kappa)|T_{48}|^2 +36(\kappa-1) |T_8|^2 }{6-(5+\kappa)\kappa} +\lambda \right) v_\Phi.
\ee
Here $T_{8,48}$ are the two irreducible components of the intrinsic torsion of $\Phi$. This functional can also be written, see (\ref{action-kappa-phi}), in terms of quantities $\Phi \wedge T\wedge_\Phi T$ and $(T)^2 v_\Phi$, where $T$ is the torsion 3-form.

Our analysis of the linearised theory in Section \ref{sec:lin} will show that there is a two-parameter family of diffeomorphism-invariant Lagrangians that are second order in derivatives and quadratic in perturbations of the Cayley form. In Section \ref{sec:lin-action} we will verify that the linearisation of (\ref{action-torsion-intr}) reproduces the two-parameter family of linearised Lagrangians of Section \ref{sec:lin}, thus showing that (\ref{action-torsion-intr}) gives the non-linear completion of the most general diffeomorphism-invariant linear Lagrangian. 

Our main aim in this paper is to characterise the Euler-Lagrange equations following from (\ref{action-intr-kappa}) and thus (\ref{action-torsion-intr}), as well as determine values of the parameter $\kappa$ that result in special properties. We identify several such special values 
\[
\kappa=0,\quad -2, \quad \frac{6}{5},
\] 
as well as values $\kappa=1, -6$ when the relation between $C$ and $T$, see (\ref{C-T-intr}), is no longer invertible, and the functional (\ref{action-torsion-intr}) is undefined. Our results are best described by the following series of propositions. 

The fact that the dimension of the space where the intrinsic torsion of a ${\rm Spin}(7)$-structure lies is equal to the dimension of the space of 3-form is known. However, the paper \cite{Spiro-Spin7}, which was an important precursor to our construction, uses a different parametrisation. The analog of Lemma 2.10 of \cite{Spiro-Spin7} in our parametrisation is the following statement:
\begin{proposition} The intrinsic torsion of a ${\rm Spin}(7)$-structure, measured by $\nabla_i \Phi_{abcd}$, where $\nabla$ is the covariant derivative with respect to the Levi-Civita connection for the metric defined by $\Phi$, lies in $\Lambda^1 \otimes \Lambda^4_7$. For the notation explaining $\Lambda^4_7$ and the decomposition of the space of forms into irreducible components see below. The intrinsic torsion can be parametrised by an object $T\in \Lambda^3$ so that
\be\label{torsion-intr}
\nabla_i \Phi_{abcd} = 4T_{i[a}{}^p \Phi_{|p|bcd]}.
\ee
Here the index $p$ of $T_{aip}$ is raised with the metric determined by $\Phi$. 
\end{proposition}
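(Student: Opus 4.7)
The plan is to first establish that $\nabla\Phi$ takes values in $\Lambda^1 \otimes \Lambda^4_7$ by a standard $G$-structure argument, and then to exhibit the map $T \mapsto 4T_{i[a}{}^p\Phi_{|p|bcd]}$ as a ${\rm Spin}(7)$-equivariant isomorphism $\Lambda^3 \to \Lambda^1 \otimes \Lambda^4_7$, so that the uniqueness and existence of $T$ follow from Schur's lemma and a dimension count.

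For the first step, in any orthonormal frame adapted to the ${\rm Spin}(7)$ structure, the Levi-Civita connection is a 1-form valued in $\mathfrak{so}(8) \cong \Lambda^2$. Because ${\rm Spin}(7)$ stabilises $\Phi$, the subalgebra $\mathfrak{spin}(7) \subset \mathfrak{so}(8)$ annihilates $\Phi$, and under ${\rm Spin}(7)$ one has the orthogonal decomposition $\Lambda^2 = \Lambda^2_{21} \oplus \Lambda^2_7$ with $\Lambda^2_{21} = \mathfrak{spin}(7)$; only the $\Lambda^2_7$ component of the connection contributes to $\nabla\Phi$. The ${\rm Spin}(7)$-equivariant map $B \mapsto B\cdot\Phi$ from $\Lambda^2$ into $\Lambda^4$, where $(B\cdot\Phi)_{abcd} = -4B_{[a}{}^p\Phi_{|p|bcd]}$, therefore has kernel $\Lambda^2_{21}$ and, by Schur's lemma applied to the two seven-dimensional irreducibles $\Lambda^2_7$ and $\Lambda^4_7$, restricts to an isomorphism $\Lambda^2_7 \to \Lambda^4_7$. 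Hence $\nabla_i\Phi_{abcd}$ lies in $\Lambda^1 \otimes \Lambda^4_7$.

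Next I would study the linear map $\Psi : \Lambda^3 \to \Lambda^1 \otimes \Lambda^4$ defined by the right-hand side of (\ref{torsion-intr}). For fixed $i$, the object $(T_i)_{ap} := T_{iap}$ is a 2-form by the total antisymmetry of $T$, and $\Psi(T)_{iabcd}$ equals $-(T_i\cdot\Phi)_{abcd}$; by the previous paragraph this automatically lies in $\Lambda^4_7$, so the image of $\Psi$ is contained in $\Lambda^1 \otimes \Lambda^4_7$. Both the source and target have dimension $\binom{8}{3} = 56 = 8\cdot 7$, and each decomposes under ${\rm Spin}(7)$ as a sum of the irreducibles of dimensions $8$ and $48$: for $\Lambda^3$ this is the decomposition $\Lambda^3 = \Lambda^3_8 \oplus \Lambda^3_{48}$ referred to in the introduction, while for $\Lambda^1 \otimes \Lambda^4_7 \cong \mathbf{8}\otimes\mathbf{7}$ this follows from the Clebsch-Gordan rule $\mathbf{8}\otimes\mathbf{7} = \mathbf{8}\oplus\mathbf{48}$. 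As $\Psi$ is manifestly ${\rm Spin}(7)$-equivariant, Schur's lemma reduces the claim that $\Psi$ is a bijection to a verification that its restrictions to $\Lambda^3_8$ and $\Lambda^3_{48}$ are both non-zero.

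The main remaining obstacle is this non-degeneracy check, which is a concrete computation with the algebraic identities for the Cayley form. For a representative of $\Lambda^3_8$ I would take $T_{iap} := V^q\Phi_{qiap}$ for a non-zero 1-form $V$; inserting this into $\Psi$ and simplifying the resulting $\Phi\Phi$ contraction using the standard Cayley identity for $\Phi_{qia}{}^p\Phi_{pbcd}$ (expressing it in terms of $\delta$'s and $\Phi$) produces an object whose further contraction with $\Phi$ recovers a non-zero multiple of $V$, so $\Psi$ is non-trivial on $\Lambda^3_8$. For a representative of $\Lambda^3_{48}$, obtained by taking a generic 3-form and projecting away its $\Lambda^3_8$ component, an analogous contraction computation shows $\Psi$ is non-zero on that isotypic piece as well. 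Combining these two verifications with the dimension count and the first step of the proof yields the unique $T\in\Lambda^3$ for which (\ref{torsion-intr}) holds.
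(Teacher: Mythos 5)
Your argument is correct, but it takes a genuinely different route from the paper's. For the containment $\nabla\Phi\in\Lambda^1\otimes\Lambda^4_7$, the paper works entirely with explicit projectors: it kills the $\Lambda^4_{1+35}$ part by differentiating $\Phi_i{}^{pqr}\Phi_{jpqr}=42\,g_{ij}$ (metric compatibility), and kills the $\Lambda^4_{27}$ part by differentiating the other constant contractions of $\Phi$ and feeding the resulting identities into the explicit formula (\ref{projector-27}) for $\pi_{27}$. Your adapted-frame argument — in a local ${\rm Spin}(7)$ frame $\nabla\Phi$ is the image of the connection $1$-form under the orbit map $\Lambda^2\to\Lambda^4$, whose kernel is $\mathfrak{spin}(7)=\Lambda^2_{21}$ — reaches the same conclusion more conceptually and with far less computation, and it is consistent with the paper's own remarks that $K(\Lambda^2)=\Lambda^4_7$ is the orbit of $\Phi$ under the orthogonal Lie algebra and that $K\circ\pi_{21}=0$ expresses ${\rm Spin}(7)$-invariance. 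For the parametrisation by a $3$-form, the paper simply exhibits the explicit isomorphism $\Lambda^1\otimes\Lambda^2_7\cong\Lambda^3$, namely $T_{a;ij}=\pi_7(T_{aij})$ together with its explicit inverse in (\ref{torsion-parametrised}) and the line below it; you instead invoke Schur's lemma on the multiplicity-free decompositions $\Lambda^3=\mathbf{8}\oplus\mathbf{48}$ and $\mathbf{8}\otimes\mathbf{7}=\mathbf{8}\oplus\mathbf{48}$, reducing everything to a non-degeneracy check on each isotypic piece. That reduction is sound, and you are right that the check is genuinely needed: $\mathbf{8}$ and $\mathbf{48}$ each occur with multiplicity two in $\Lambda^1\otimes\Lambda^2$, so equivariance alone cannot decide whether the restrictions to $\Lambda^3_8$ and $\Lambda^3_{48}$ vanish. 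The only soft spot is that these two non-vanishing verifications are left as sketches, the $\Lambda^3_{48}$ one rather vaguely; you can close both at once with no new contraction work by observing that totally antisymmetrising $4T_{i[a}{}^p\Phi_{|p|bcd]}$ over all five indices and taking the Hodge dual yields $\tfrac{2}{5}J_3(T)$ — this is the purely algebraic computation behind (\ref{torsion-dT}), using only (\ref{epsilon-phi-3}) — and $J_3$ is invertible by (\ref{J3-inv}), so your map $\Psi$ is injective on all of $\Lambda^3$ and the dimension count $56=8\cdot 7$ finishes the proof. What the paper's route buys is completely explicit formulas in both directions (needed later for the action); what yours buys is a short, structural proof that makes clear why the statement is forced by representation theory.
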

We remark that the only metric that is used in this paper is the one defined by $\Phi$. 

It turns out that the torsion 3-form is completely determined by the exterior derivative $d\Phi$. This is the content of the following proposition:
\begin{proposition} The Hodge dual of the projection of (\ref{torsion-intr}) to the space of 5-forms can be written as
\be\label{Phi-T-intr}
\star d\Phi = 2 J_3(T),
\ee
where $J_3$ is a certain operator $J_3:\Lambda^3\to \Lambda^3$ defined by $\Phi$, see (\ref{J3}). The operator $J_3$ is invertible, and so $T$ is completely determined by $d\Phi$. 
\end{proposition}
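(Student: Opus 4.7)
The plan is to derive $\star d\Phi = \frac{2}{5} J_3(T)$ from Proposition 1 by computing $d\Phi$ directly in terms of $T$ via the torsion-freeness of the Levi-Civita connection, then passing to a 3-form equation via the Hodge star. Invertibility of $J_3$ will then follow by Schur's lemma applied to the ${\rm Spin}(7)$-irreducible decomposition $\Lambda^3 = \Lambda^3_8 \oplus \Lambda^3_{48}$, together with a check on one representative of each summand. Concretely, since $\nabla$ is torsion-free one has $(d\Phi)_{iabcd} = 5\,\nabla_{[i}\Phi_{abcd]}$, and substituting (\ref{torsion-intr}) yields
\[ (d\Phi)_{iabcd} \;=\; 20\, T_{[ia|}{}^{p}\,\Phi_{p|bcd]}, \]
with total antisymmetrisation over all five external indices.

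To obtain the claimed 3-form identity, I would next apply the Hodge star, contracting with $\tfrac{1}{5!}\ve$. On the right-hand side this produces expressions of the form $\ve\cdot\Phi$ whose partial contractions can be evaluated using the self-duality $\star\Phi=\Phi$ together with the standard algebraic identities for double and triple contractions of the Cayley form (which produce linear combinations of the metric and of $\Phi$ itself). After these substitutions, all $\ve$-symbols disappear and the right-hand side collapses to an expression built solely from $T$, $g^{-1}$ and $\Phi$ with three free indices. Reading off the coefficient of each independent tensor structure identifies the operator $J_3$ of (\ref{J3}) and produces the overall combinatorial prefactor $2/5$.

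For invertibility, note that $J_3$ is built only from $\Phi$ and its associated metric, hence it is ${\rm Spin}(7)$-equivariant as a map $\Lambda^3\to\Lambda^3$. Under the decomposition $\Lambda^3 = \Lambda^3_8\oplus\Lambda^3_{48}$ into irreducibles, Schur's lemma forces $J_3$ to act as a scalar $\mu_8$ on $\Lambda^3_8$ and as a scalar $\mu_{48}$ on $\Lambda^3_{48}$, so invertibility reduces to checking $\mu_8,\mu_{48}\neq 0$ by evaluating $J_3$ on one non-zero element of each summand (for example, on $v\lrcorner\Phi$ for the 8-dimensional piece). The main anticipated obstacle is the middle step: tracking numerical constants across a total antisymmetrisation of $T_{ia}{}^p\Phi_{pbcd}$ over five indices, contracting with the volume form, and systematically reducing via the Cayley contraction identities is the error-prone part that pins down both the explicit form of $J_3$ and the prefactor $2/5$. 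Once $J_3$ is known explicitly, the invertibility check is a short representation-theoretic exercise.
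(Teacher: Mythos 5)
Your proposal follows essentially the same route as the paper: the paper likewise writes $\star(d\Phi)_{mnr}$ as a contraction of $\epsilon$ with $\partial_a\Phi_{ijkl}$, substitutes the torsion formula, and collapses the $\epsilon\cdot\Phi$ contraction via the self-duality identity (\ref{epsilon-phi-3}) to land directly on $\frac{2}{5}J_3(T)$. The only cosmetic difference is invertibility: the paper reads off the explicit inverse $J_3^{-1}=\frac{1}{6}(J_3+5\mathbb{I})$ from the already-established relation $(J_3)^2=6\mathbb{I}-5J_3$, whereas you invoke Schur's lemma on $\Lambda^3_8\oplus\Lambda^3_{48}$ and check the two eigenvalues; both are sound.
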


We now explain what makes the $\kappa=0$ version of the functional (\ref{action-intr-kappa}) special. 
\begin{proposition}
The Euler-Lagrange equation arising from $S_{\kappa=0}[\Phi,C]$ by extremising it with respect to $C$ is $C=T$.
\end{proposition}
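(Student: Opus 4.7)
The approach is direct variational calculus in $C$, followed by identification of the resulting algebraic equation with the torsion formula of Proposition~2.2. Only two terms in the action (\ref{action-intr}) depend on $C$, namely $\int_M \Phi\wedge dC$ and $-6\int_M \Phi\wedge(C\wedge_\Phi C)$; the constraint terms and $\tfrac{\lambda}{6}v_\Phi$ contribute nothing. A key preliminary is that the bilinear operation $\wedge_\Phi$ on 3-forms is symmetric once one recalls that the 4-form $A\wedge_\Phi B$ is implicitly antisymmetrised in its four free indices: starting from $(A\wedge_\Phi B)_{abcd}=A_{abp}B_{cdq}g^{pq}$, the pair-swap $(ab)\leftrightarrow(cd)$ is an even permutation in $S_4$, and after relabelling the dummy indices $p,q$ one finds $A\wedge_\Phi B=B\wedge_\Phi A$ on the antisymmetrisation. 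Consequently, varying $C\wedge_\Phi C$ produces a factor of $2$; together with integration by parts on $\Phi\wedge d(\delta C)$ and a reordering of form factors, one obtains
\be
\delta_C S \;=\; \int_M \delta C\wedge d\Phi \;-\; 12\int_M \Phi\wedge(C\wedge_\Phi \delta C).
\ee

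Both integrands are then recast as pointwise inner-product pairings of the form $\int_M \langle \delta C,\cdot\rangle\, v_\Phi$. The first equals $\int_M \langle \delta C,\star d\Phi\rangle\, v_\Phi$ by the defining property of the Hodge star. The second, being bilinear in $\delta C$ and $C$ and algebraic in $\Phi$, takes the form $\int_M \langle\delta C,\mathcal{O}(C)\rangle\, v_\Phi$ for a ${\rm Spin}(7)$-equivariant linear operator $\mathcal{O}\colon\Lambda^3\to\Lambda^3$ built solely from $\Phi$ and the metric $g$. Setting $\delta_C S=0$ for all $\delta C\in\Lambda^3$ therefore gives the pointwise equation $\star d\Phi = 12\,\mathcal{O}(C)$.

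It remains to show that $12\,\mathcal{O}=\tfrac{2}{5}J_3$, whereupon comparison with (\ref{Phi-T-intr}) and invertibility of $J_3$ (from Proposition~2.2) immediately yield $C=T$. By the decomposition $\Lambda^3=\Lambda^3_8\oplus\Lambda^3_{48}$ into ${\rm Spin}(7)$-irreducibles and Schur's lemma, each of $\mathcal{O}$ and $J_3$ acts by a scalar on each component, so the identification reduces to matching two pairs of numerical constants; this can be done by applying both operators to concrete representative vectors in $\Lambda^3_8$ and $\Lambda^3_{48}$ and using the explicit formula (\ref{J3}) for $J_3$. This is the main obstacle: not only must one compute the constants, but one must verify that the \emph{ratio} of the scalars of $\mathcal{O}$ to those of $J_3$ is the same on the two subspaces. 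This coincidence of ratios is precisely what singles out the coefficient $-6$ in (\ref{action-intr}), equivalently the value $\kappa=0$ in the family (\ref{action-intr-kappa}): for any other $\kappa$, the added $\kappa(C)^2 v_\Phi$ term would shift one of the two scalars of $\mathcal{O}$ relative to $J_3$ and the Euler--Lagrange equation would no longer collapse to $C=T$.
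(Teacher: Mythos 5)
Your overall route is the same as the paper's: only two terms of (\ref{action-intr}) depend on $C$, the variation produces $\star d\Phi$ plus an algebraic, $\Phi$-equivariant operator applied to $C$, and the conclusion follows by comparing with $\star d\Phi=\tfrac{2}{5}J_3(T)$ and inverting $J_3$. The setup (symmetry of $\wedge_\Phi$, the factor of $2$ from varying $C\wedge_\Phi C$, the integration by parts giving $\int \delta C\wedge d\Phi$) is correct. However, the decisive step — showing that the operator $\mathcal{O}$ coming from $\Phi\wedge(C\wedge_\Phi\,\cdot\,)$ satisfies $12\,\mathcal{O}=\tfrac{2}{5}J_3$ with exactly the right constant — is only described, not carried out. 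Since the entire content of the proposition is that the coefficient $-6$ in (\ref{action-intr}) is tuned so that this identification holds (any other coefficient would give $C$ proportional to, but not equal to, $T$ after inverting), deferring this computation leaves out the substance of the proof. As written, your argument establishes only that $C$ is determined by $d\Phi$ through some invertible equivariant operator, not that $C=T$.

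I would also point out that the Schur's-lemma detour, while valid, makes the identification look harder than it is and introduces a spurious worry. Writing $\Phi\wedge\sigma=\tfrac{1}{4!}\Phi^{abcd}\sigma_{abcd}\,v_g$ (by self-duality of $\Phi$), the quadratic term pairs $\delta C_{cdp}$ against $\Phi^{abcd}C_{ab}{}^{p}$, and since $\delta C$ is totally antisymmetric only the antisymmetrisation $\Phi^{ab[cd}C_{ab}{}^{p]}=\tfrac{2}{3}J_3(C)^{cdp}$ survives, directly from the definition (\ref{J3}). So $\mathcal{O}$ is \emph{identically} a single multiple of $J_3$ on all of $\Lambda^3$; there is no need to compute two eigenvalues separately and check that their ratios agree. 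This is how the paper proceeds: the index variation yields $5\,\tfrac{1}{5!}\epsilon_{bcd}{}^{aijkl}\partial_a\Phi_{ijkl}-2J_3(C)_{bcd}+2\kappa C_{bcd}=0$, which for $\kappa=0$ is compared with (\ref{torsion-dT}) to give $J_3(C)=J_3(T)$ and hence $C=T$. What the extra $\kappa(C)^2$ term does is replace $J_3$ by $J_3-\kappa\,\mathbb{I}$, which is still invertible for generic $\kappa$ but no longer proportional to $J_3$ — that is the correct way to phrase why $\kappa=0$ is singled out, and it is consistent with (\ref{C-T}). To complete your proof you must actually fix the two normalisation constants (the one relating $\int\delta C\wedge d\Phi$ to $\tfrac{2}{5}J_3(T)$ via (\ref{Phi-T-intr}), and the one relating $12\,\mathcal{O}$ to $J_3$) and exhibit their equality.
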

One can rephrase this by saying that $S_{\kappa=0}[\Phi,C]$ is precisely the first-order action dependent on both $\Phi,C$ that leads to $C=T$ as the $C$ field equation. Importantly, there is no ambiguity in the construction of the action once we demand that $C=T$ is to follow. In contrast, the critical value of $C$ for the more general $\kappa\not=0$ action (\ref{action-intr-kappa}) is 
\be\label{C-T-intr}
C= \frac{6 T +\kappa J_3(T)}{6 - (5+\kappa)\kappa}.
\ee
It is clear that $\kappa=1,-6$ are special values in that the relation between $C$ and $T$ is no longer invertible. 
The property of the action $S_{\kappa=0}[\Phi,C]$ that the value of $C$ as determined by its corresponding Euler-Lagrange equation is the intrinsic torsion $C=T$ makes this action a precise analogue of the 4D Plebanski action, see \cite{Bhoja:2024xbe}. In this sense, it is a preferred member in the family of action functionals (\ref{action-intr-kappa}). 

The next proposition describes the Euler-Lagrange equations resulting by varying $S_{\kappa=0}[\Phi,C]$ with respect to $\Phi$:
\begin{proposition}
The Euler-Lagrange equations resulting from extremisation of $S_{\kappa=0}[\Phi,C]$ with respect to $\Phi$ can be written as:
\be\label{Phi-eqs-intr}
\partial_{[a}T_{bcd]} - \frac{3}{2}  T_{[ab}{}^{p} T_{cd]p}  - \frac{1}{8} (TT)_{[a|e|} \Phi^e{}_{bcd]} + \frac{\lambda}{84} \Phi_{abcd} = \Psi_{[ab}{}^{pq} \Phi_{|pq| cd]}, 
\ee
where $\Psi^{abcd}$ is an arbitrary matrix in ${\rm Sym}_0^2(\Lambda^2_7)$, which is the space of symmetric tracefree matrices, with the trace defined as ${\rm Tr}(\Psi)=\Psi_{ab}{}^{ab}$, and 
\be
(TT)_{ab}:= \Phi^{ijkl} T_{ija} T_{klb} - \frac{1}{7} g_{ab} \Phi^{ijkl} T_{ij}{}^p T_{klp}
\ee
is a symmetric matrix quadratic in the torsion. We note that the factor of $1/7$ here is not a typo. We can also write the Euler-Lagrange equations in form notation as
\be\label{feqs-form-intr}
d\Phi - 6 T\wedge_\Phi T - \frac{1}{16} K(TT) + \frac{\lambda}{42} \Phi = \Psi\Phi. 
\ee
Here $K$ is the map from the space of symmetric tensors to $\Lambda^4$ described in (\ref{h-delta-phi}). It is shown later in the text, see Section 2.9, that a general 4-form in $\Lambda^4_{27}$ can be parametrised as $\Psi\Phi$, and so the right-hand side of both (\ref{Phi-eqs-intr}) and (\ref{feqs-form-intr}) is a general element of  $\Lambda^4_{27}$. 
\end{proposition}

\begin{proposition} An alternative way of writing the field equations is to project both sides on the $\Lambda^4_{1+7+35}$ component in $\Lambda^4$. This gives the following set of equations
\be\label{feqs-intr}
\frac{1}{4} \Phi_{b}{}^{pqr} \nabla_{a}T_{pqr} - \frac{3}{4}  \Phi_{b}{}^{pqr}\nabla_r T_{apq}
- \frac{3}{2}\Phi_{b}{}^{pqr} T_{ap}{}^s T_{qrs}  \\ \nonumber
- \frac{3}{4}\Phi^{pqrs} T_{apq} T_{brs}
+ \frac{1}{2} g_{ab}\left(\lambda +\frac{3}{8} \Phi^{pqrs} T_{pq}{}^p T_{rsp}\right)=0.
\ee
The equations here are written in terms of $\nabla$, but they have the same form with $\nabla$ replaced by the partial derivative operator $\partial$. We note that the left-hand side is not automatically $ab$ symmetric, and the anti-symmetric part of these equations are non-trivial. The anti-symmetric part can be shown to lie in $\Lambda^2_7$, and so the total number of independent second-order differential equations is $36+7=43$, the dimension of the space of Cayley forms. 
\end{proposition}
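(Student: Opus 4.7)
The plan is to apply to both sides of the 4-form equation (\ref{feqs-form-intr}) the ${\rm Spin}(7)$-equivariant $\Phi$-contraction map $P:\Lambda^4 \to \R^8\otimes \R^8$ defined by $P(\omega)_{ab}=\tfrac{1}{4}\omega_{b}{}^{pqr}\Phi_{apqr}$, and verify that the result reproduces (\ref{feqs-intr}). The key algebraic input is the ${\rm Spin}(7)$-decomposition $\Lambda^4 = \Lambda^4_1 \oplus \Lambda^4_7 \oplus \Lambda^4_{27} \oplus \Lambda^4_{35}$ of dimensions $1+7+27+35=70$. By Schur's lemma, $P$ acts by a scalar on each irreducible summand; since $\R^8\otimes \R^8$ decomposes under ${\rm Spin}(7)$ as $1 + 7 + 21 + 35$, with no $27$-component, $P$ must annihilate $\Lambda^4_{27}$, while on the other three summands $P$ is an isomorphism onto the trace part, the $\Lambda^2_7$ antisymmetric part and the tracefree symmetric part of $\R^8\otimes\R^8$ respectively. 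Hence the right-hand side $\Psi(\Phi)\in\Lambda^4_{27}$ of (\ref{feqs-form-intr}) is annihilated, and the projected equation carries all the content modulo this undetermined multiplier.

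Next I would compute $P$ on each of the four terms on the left of (\ref{feqs-form-intr}). For $P(d\Phi)$ I would expand $d\Phi$ as a fully antisymmetrised partial derivative of $\Phi$, use (\ref{torsion-intr}) to rewrite the Christoffel contributions in terms of $T$, and reduce using the Cayley contraction identity $\Phi_a{}^{pqr}\Phi_{bpqr}=42\,g_{ab}$ and its variants with one and two uncontracted indices; after the combinatorics of antisymmetrisation this yields the first two terms $\tfrac{1}{4}\Phi_b{}^{pqr}\partial_a T_{pqr}-\tfrac{3}{4}\Phi_b{}^{pqr}\partial_r T_{apq}$ of (\ref{feqs-intr}). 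For $P(6\,T\wedge_\Phi T)$ the inverse metric in $\wedge_\Phi$ couples with the $\Phi$ from $P$ and the one inside the Cayley structure, so the pair of $\Phi$'s reduce to $\delta$-products by the same identities, yielding the cubic-in-$T$ contributions $-\tfrac{3}{2}\Phi_b{}^{pqr}T_{ap}{}^s T_{qrs}$ and $-\tfrac{3}{4}\Phi^{pqrs}T_{apq}T_{brs}$ together with a trace term proportional to $g_{ab}\Phi^{pqrs}T_{pq}{}^p T_{rsp}$. The map $K$ in (\ref{h-delta-phi}) is constructed so that $P\circ K$ is proportional to the identity on tracefree symmetric tensors, so $P(K(TT))$ returns $(TT)_{ab}$, which combines with the trace term above. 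Finally $P(\tfrac{\lambda}{42}\Phi)=\tfrac{\lambda}{2}g_{ab}$, and assembling everything and matching numerical coefficients reproduces (\ref{feqs-intr}).

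For the equivalence of the $\nabla$-version and the $\partial$-version of (\ref{feqs-intr}): equation (\ref{Phi-eqs-intr}) contains derivatives of $T$ only through $\partial_{[a}T_{bcd]}$, a fully antisymmetrised derivative for which $\partial$ and $\nabla$ agree. The projected equation inherits two non-antisymmetrised derivative terms, but the Christoffel corrections arising in the switch $\partial\leftrightarrow\nabla$ cancel against each other once $\nabla\Phi=4T\cdot\Phi$ is substituted, so the $\nabla$-form has the same coefficients as the $\partial$-form. Finally, since the image of $P$ is the $43$-dimensional subspace of $\R^8\otimes\R^8$ spanned by the $1+7+35$ components, the antisymmetric part of (\ref{feqs-intr}) necessarily lies in $\Lambda^2_7$, and the total number of independent second-order PDEs equals $36+7=43={\rm dim}({\rm GL}(8,\R)/{\rm Spin}(7))$. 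The main technical obstacle is the careful bookkeeping of the various ${\rm Spin}(7)$ contraction identities for pairs of $\Phi$'s with zero, one and two uncontracted indices and the matching of combinatorial factors in the cubic-$T$ and trace parts; no conceptual difficulty arises beyond the calculus already used in Propositions 1.2--1.4.
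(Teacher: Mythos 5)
Your proposal follows essentially the same route as the paper: the map $P(\omega)_{ab}\propto \Phi_{b}{}^{pqr}\omega_{apqr}$ is precisely the contraction the paper applies to the field-equation 4-form $E'_{abcd}$, and the term-by-term bookkeeping you outline is the computation carried out there. The one genuine (and welcome) difference is that you justify both the annihilation of $\Lambda^4_{27}$ and the $\Lambda^2_7$-valuedness of the antisymmetric part in one stroke via Schur's lemma and $\mathbf{8}\otimes\mathbf{8}=\mathbf{1}+\mathbf{7}+\mathbf{21}+\mathbf{35}$, whereas the paper verifies the former by the explicit computation $(\Psi\Phi)_{ipqr}\Phi^{apqr}=0$ and the latter only later, through the divergence identity for $\nabla^{a}T_{a;mn}$; your argument is cleaner, though you should still check that $P$ is nonzero on each of $\Lambda^4_1,\Lambda^4_7,\Lambda^4_{35}$ (immediate from $\Phi_{b}{}^{pqr}\Phi_{apqr}=42 g_{ab}$ and the operator $K'$). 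Two small imprecisions: the first term of the 4-form equation is $dT$, i.e.\ $\partial_{[a}T_{bcd]}$, not the 5-form $d\Phi$ (so there are no ``Christoffel contributions from $\nabla\Phi$'' to rewrite -- one simply contracts $\Phi_{b}{}^{pqr}$ with $\partial_{[a}T_{pqr]}$); and the $\partial\leftrightarrow\nabla$ equivalence follows directly because the two derivative terms reassemble into $\Phi_{b}{}^{pqr}\nabla_{[a}T_{pqr]}$, in which the symmetric Christoffel symbols cancel, with no need to invoke the torsion formula for $\nabla\Phi$.
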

We can also characterise what the $\kappa=0$ field equations imply for the Riemann curvature of the metric defined by $\Phi$. This is done in the main text. We will see that the metrics defined by $\Phi$ that are the critical points of the action functional $S_{\kappa=0}[\Phi,C]$ are not in general Einstein. 

There is another special Lagrangian in the family (\ref{action-intr-kappa}), namely one corresponding to $\kappa=-2$. 
\begin{proposition}
The functional $S_{\kappa=-2}[\Phi]$ is a multiple of the integrated scalar curvature, plus a volume term. More precisely
\[
S_{\kappa=-2}[\Phi] = \frac{1}{6} \int \Big( \frac{4}{3} T J_3(T)  -T^2    + \lambda \Big) v_g d^8x 
= - \frac{1}{6} \int \Big( R -  \lambda \Big)v_g d^8x.
\]
Here $v_g=v_\Phi$ is the volume form for the metric defined by $\Phi$. The resulting Euler-Lagrange equations are Einstein equations for the metric defined by $\Phi$. 
\end{proposition}
Thus, Einstein metrics in eight dimensions can be described as critical points of the $\kappa=-2$ action. 

It is somewhat surprising that there appear two different eight dimensional generalisations of the Plebanski formalism for 4D General Relativity, see \cite{Bhoja:2024xbe} for the presentation most closely aligned to one in this paper. One such generalisation is given by the $\kappa=0$ action, which shares with the 4D Plebanski action the property that the Euler-Lagrange equations for the auxiliary field sets it to be equal to the intrinsic torsion. Another generalisation is the $\kappa=-2$ functional, which shares with the 4D Plebanski action the property that the critical points are Einstein structures.  

For any value of $\kappa$, the gradient of the action functional $S_\kappa[\Phi]$ with respect to $\Phi$ defines a certain ${\rm Spin}(7)$-flow whose properties are yet to be understood. In particular, for $\kappa=0$ this flow is given by the Hodge dual of the 4-form on the left-hand-side of (\ref{feqs-form-intr}), projected to the space $\Lambda^4_{1+7+35}$. It is through this gradient flow that our work connects to a wider context of flows of geometric structures \cite{Moreno-Earp}. Much work has been done in the context of such flows for the case of $G_2$-structures, starting with \cite{Bryant} and culminating in the recent work \cite{Spiro-G2}, which was an important predecessor to this paper. A particular geometric flow that has received attention is the so-called harmonic flow \cite{Loubeau-Earp1}, which is defined as the gradient flow of the squared norm of the torsion tensor. This flow has been studied in various settings in \cite{Fowdar-Earp1}, \cite{Fowdar-Earp2}, \cite{Dwivedi-Earp}, \cite{Loubeau} and very recently in the case of ${\rm Spin}(7)$-structures in \cite{Dwivedi}. The functional generating the flow in \cite{Dwivedi} is $\int |T|^2$. As is clear from (\ref{action-kappa-phi}), this corresponds to $\kappa=6/5$. 
In contrast, the functional $S_{\kappa=0}[\Phi]$ that arises if we demand $C=T$ is given by 
\[
S_{\kappa=0}[\Phi]  \sim \int_M  T J_3(T) v_\Phi =\int_M (|T_{48}|^2 - 6 |T_8|^2) v_\Phi.
\]
Unlike $\int |T|^2$, this functional does not have a definite sign. The other natural functional, whose critical points are Einstein structures, namely
\[
S_{\kappa=-2}[\Phi] \sim \int_M (\frac{4}{3} T J_3(T) - T^2) v_\Phi = \int_M (\frac{1}{3} |T_{48}|^2 - 9 |T_8|^2) v_\Phi,
\]
also does not have a definite sign. 

More work is needed to get better intuition about the properties of the critical points of both the $\kappa=0$ and $\kappa\not=0$ actions. We hope that this work will follow. It is worth remarking already at this point, however, that a construction similar to that described in this paper is possible also for other $G$-structures, in various dimensions. It would be particularly interesting to perform a similar analysis and construct actions for 3-forms in 7-dimensions, building on the work \cite{Spiro-G2}. 

Many of the tensor computations in the paper are performed using symbolic manipulation with xAct Mathematica package \cite{xact}. For the convenience of the interested reader, we have placed a Mathematica notebook containing all the definitions and some example calculations in a GitHub repository \cite{Git}. The stated representation theoretic facts are obtained using the Mathematica package LieART \cite{Feger:2019tvk}. 

\section{Decomposition of the spaces of forms}

Our index (anti-) symmetrisation conventions are as follows. Square brackets denote anti-symmetrisation, and are defined by 
\be
[a_1 \ldots a_k] = \frac{1}{k!} \sum_p (-1)^p p(a_1 \ldots a_k),
\ee
where the sum is taken over all permutations $p$ and $(-1)^p$ is plus or minus identity depending on whether the permutation is even or odd. Symmetrisation is denoted by round brackets, and is defined similarly, apart from the absence of the minus signs. A vertical line around an index (or a set of indices) denotes the fact that this set of indices is not involved in (anti-) symmetrisation. 

\subsection{Basic algebra}

Similar to \cite{Bryant} and \cite{Spiro-Spin7}, we use the index notation, which is very useful for encoding various relations satisfied by the Cayley form. 
The basic algebraic relation satisfied by the 4-form $\Phi_{abcd}$ is
\be\label{phi-single-contr}
\Phi_{ijkp} \Phi_{abcp} = \\ \nonumber g_{ia} g_{jb} g_{kc} + g_{ib} g_{jc} g_{ka} +g_{ic} g_{ja} g_{kb} 
- g_{ia} g_{jc} g_{kb} -g_{ic} g_{jb} g_{ka} -g_{ib} g_{ja} g_{kc} \\ \nonumber
- g_{ia} \Phi_{jkbc} - g_{ja} \Phi_{kibc}-g_{ka} \Phi_{ijbc} \\ \nonumber
- g_{ib} \Phi_{jkca} - g_{jb} \Phi_{kica}-g_{kb} \Phi_{ijca}\\ \nonumber
- g_{ic} \Phi_{jkab} - g_{jc} \Phi_{kiab}-g_{kc} \Phi_{ijab}.
\ee
This identity, together with identities related to self-duality of $\Phi$, are sufficient for most of the calculations one needs to do with $\Phi$. A Mathematical notebook that can be used for algebraic computations with $\Phi$ based on this identity is available via \cite{Git}. 

One more contraction of the above identity gives
\be\label{phi-double-contr}
\Phi_{ijpq} \Phi_{abpq} = 6 g_{ia} g_{jb} - 6 g_{ib} g_{ja} - 4\Phi_{ijab}.
\ee
Yet one more contraction gives
\be\label{phi-triple-contr}
\Phi_{ipqr} \Phi_{apqr} = 42 g_{ia}.
\ee

The 4-form $\Phi$ is self-dual
\be\label{self-duality}
\frac{1}{4!}\epsilon_{ijkl}{}^{abcd}\Phi_{abcd} = \Phi_{ijkl}.
\ee

Useful consequences of self-duality are
\be\label{epsilon-phi-3}
\epsilon^{aijkl pqr} \Phi_{bpqr} =  30 \delta_b^{[a} \Phi^{ijkl]},
\ee
and
\be\label{epsilon-phi-2}
 \epsilon^{ijklmn pq} \Phi_{abpq} =  60 \delta_a^{[i} \delta_b^j \Phi^{klmn]},
\ee
and
\be\label{epsilon-phi-1}
 \epsilon^{ijklmnp r} \Phi_{abc r} =  210 \delta_a^{[i} \delta_b^j \delta_c^k \Phi^{lmnp]}.
\ee

\subsection{Identity}

The following non-trivial identity 
\be\label{identity}
- 2\Phi_{[ijk}{}^{[a} \Phi_{l]}{}^{bcd]} -3 \Phi_{[ij}{}^{[ab} \Phi_{kl]}{}^{cd]} +42 \Phi_{[ij}{}^{[ab} \delta_k^c \delta_{l]}^{d]} + \Phi_{ijkl} \Phi^{abcd}=0
\ee
can be checked by multiplying with $\delta_a^i$ and using the identity (\ref{phi-single-contr}) to check that the result is zero. Another useful check is to contract the left-hand side with $\Phi^{ijkl}$, again producing zero. This identity does not seem to have appeared in the literature before. One can derive this identity as follows. An argument detailed in Section 2.5 below suggests that there must exist an identity of this sort. One can then take an arbitrary combination of the first three terms and equate this to the last term. Requiring the contraction with $\delta_a^i$ and with $\Phi^{ijkl}$ to give correct identities leads to the unique choice of the coefficients as above. This computation is spelled out in \cite{Git}. Another possibility is to look for an arbitrary linear combination of the first three terms that is self-dual with respect to both $abcd$ and $ijkl$. Again, this uniquely determines the coefficients as above. 

\subsection{Decomposition of $\Lambda^2$}

The following material is standard, see e.g. \cite{Spiro-Spin7}. Our notation for the operators introduced is different from that in \cite{Spiro-Spin7}, but, we hope, is systematic and convenient. 

We introduce the following operator on 2-forms
\be
J_2: \Lambda^2\to \Lambda^2, \qquad  J_2(\beta)_{ij} = \frac{1}{2} \Phi_{ij}{}^{ab} \beta_{ab}.
\ee
Using (\ref{phi-double-contr}) we see that
\be
(J_2)^2 = 3\mathbb{I} - 2J_2.
\ee
This means that the eigenvalues of $J_2$ are $-3, 1$. The eigenspace of eigenvalue $-3$ is $\Lambda^2_7$, and eigenspace of eigenvalue $1$ is $\Lambda^2_{21}$. The two projectors are
\be
\pi_7 = \frac{1}{4}(\mathbb{I} - J_2), \qquad \pi_{21} = \frac{1}{4}(3\mathbb{I} + J_2).
\ee
For later purposes we note that
\be
J_2^{-1} = \frac{1}{3}( 2\mathbb{I} + J_2).
\ee

\subsection{Decomposition of $\Lambda^3$}

We introduce the following operator on 3-forms
\be\label{J3}
J_3:\Lambda^3\to \Lambda^3, \\ \nonumber
J_3(\gamma)_{ijk} = \frac{1}{2}( \Phi_{ij}{}^{pq} \gamma_{kpq} +  \Phi_{jk}{}^{pq} \gamma_{ipq}+ \Phi_{ki}{}^{pq} \gamma_{jpq}) = \frac{3}{2} \Phi_{[ij}{}^{pq} \gamma_{k]pq} 
\ee
A calculation using (\ref{phi-single-contr}) gives
\be
(J_3)^2 = 6 \mathbb{I} - 5 J_3.
\ee
This means that the eigenvalues of $J_3$ are $-6,1$. The eigenspace of eigenvalue $-6$ is $\Lambda^3_8$, and eigenspace of eigenvalue $1$ is $\Lambda^3_{48}$. The elements of the space $\Lambda^3_8$ are of the form
\be\label{lambda3-8}
\Lambda^3_8 = \{ X^p \Phi_{pijk}, X\in TM\},
\ee
and 
\be\label{lambda3-48}
\Lambda^3_{48} = \{ \gamma\in \Lambda^3 : \gamma\wedge \Phi=0\}.
\ee
We note that
\be\label{projector-48}
\pi_{48} = \frac{6}{7} \left( \mathbb{I} + \frac{1}{6} J_3\right), \qquad \pi_8 = \frac{1}{7} \left( \mathbb{I}-J_3\right).
\ee
We also note that
\be\label{J3-inv}
J_3^{-1} = \frac{1}{6}(J_3+5\mathbb{I}).
\ee

\subsection{Decomposition of $\Lambda^4$}

Unlike \cite{Spiro-Spin7}, which uses a more indirect approach, we decompose $\Lambda^4$ in a way completely analogous to what was done in the case $\Lambda^2, \Lambda^3$. The only arising difficulty is that the operator that we need in this case satisfies a more complicated (fourth-order) relation. 

We introduce the following operator on 4-forms
\be 
J_4:\Lambda^4\to \Lambda^4, \qquad J_4(\sigma)_{ijkl} = 3 \Phi_{[ij}{}^{pq} \sigma_{kl]pq} = \\ \nonumber
\frac{1}{2}( \Phi_{ij}{}^{pq} \sigma_{pqkl} + \Phi_{ki}{}^{pq} \sigma_{pqjl} + \Phi_{il}{}^{pq} \sigma_{pqjk} 
+ \Phi_{kl}{}^{pq} \sigma_{pqij} + \Phi_{jl}{}^{pq} \sigma_{pqki} + \Phi_{jk}{}^{pq} \sigma_{pqil} ).
\ee
We remark that this is the map denoted by $\Lambda_\Phi$ in \cite{Spiro-Spin7}.
We have the following relation, also to be found in \cite{Spiro-Spin7}
\be\label{J4-squared}
(J_4)^2(\sigma)_{ijkl} = \frac{1}{2}( \Phi_{ij}{}^{ab} \Phi_{kl}{}^{cd} + \Phi_{ki}{}^{ab} \Phi_{jl}{}^{cd}+\Phi_{il}{}^{ab} \Phi_{jk}{}^{cd})\sigma_{abcd} 
\\ \nonumber
+ 6 \sigma_{ijkl} - 8 J_4(\sigma)_{ijkl} = 
\frac{3}{2} \Phi_{[ij}{}^{ab} \Phi_{kl]}{}^{cd} \sigma_{abcd} - 24 \Phi_{[ij}{}^{ab} \sigma_{kl]ab}+ 6\sigma_{ijkl}.
\ee
We also have the following result for the cube of this operator
\be
(J_4)^3(\sigma)_{ijkl} = -6 \Phi_{[ijk}^a \Phi_{l]}^{bcd} \sigma_{abcd} - 15 \Phi_{[ij}{}^{ab} \Phi_{kl]}{}^{cd} \sigma_{abcd} \\ \nonumber
+ 258 \Phi_{[ij}{}^{ab} \sigma_{kl]ab} 
-24 \sigma_{ijkl}.
\ee
Using the identity (\ref{identity}) we can rewrite this as
\be\label{J4-cube}
(J_4)^3(\sigma)_{ijkl} =  -6 \Phi_{[ij}{}^{ab} \Phi_{kl]}{}^{cd} \sigma_{abcd} + 132 \Phi_{[ij}{}^{ab} \sigma_{kl]ab} \\ \nonumber
- 3\Phi_{ijkl} \Phi^{abcd} \sigma_{abcd} 
-24 \sigma_{ijkl}.
\ee
Finally, for the fourth power of this operator we have
\be
(J_4)^4(\sigma)_{ijkl} = 87 \Phi_{[ijk}^a \Phi_{l]}^{bcd} \sigma_{abcd} +\frac{345}{2} \Phi_{[ij}{}^{ab} \Phi_{kl]}{}^{cd} \sigma_{abcd} \\ \nonumber 
-2643 \Phi_{[ij}{}^{ab} \sigma_{kl]ab} 
+ \frac{9}{2} \Phi_{ijkl} \Phi^{abcd} \sigma_{abcd}+168 \sigma_{ijkl}.
\ee
Using (\ref{identity}) we can rewrite this as
\be
(J_4)^4(\sigma)_{ijkl} =  42 \Phi_{[ij}{}^{ab} \Phi_{kl]}{}^{cd} \sigma_{abcd} -816 \Phi_{[ij}{}^{ab} \sigma_{kl]ab} \\ \nonumber 
+48 \Phi_{ijkl} \Phi^{abcd} \sigma_{abcd} 
+168 \sigma_{ijkl}.
\ee
This shows that 
\be\label{J4-identity}
(J_4)^4 + 16 (J_4)^3+ 36 (J_4)^2 - 144 J_4=0,
\ee
or in other words
\be
(J_4+12\mathbb{I})(J_4+6\mathbb{I})(J_4-2\mathbb{I})J_4=0.
\ee
This shows that the operator $J_4$ has eigenvalues $-12, -6, 2, 0$, see also \cite{Spiro-Spin7}. It is clear that the identity (\ref{identity}) is key to make this calculation work. In fact, this is how the identity (\ref{identity}) was derived in the first place. One knows that (\ref{J4-identity}) must be true, which shows that an identity of the type (\ref{identity}) must be true. One then writes a general relation of the sort (\ref{identity}), and fixes the coefficients in such a way that its contraction gives a true statement. This uniquely fixes (\ref{identity}). 

The eigenspaces are the irreducible components of the space of 4-forms
\be\label{J4-eigenspaces}
\Lambda^4_1 =\{ \sigma\in \Lambda^4: J_4(\sigma)=-12 \sigma\}, \quad \Lambda^4_{27} =\{ \sigma\in \Lambda^4: J_4(\sigma)=2 \sigma\}, \\ \nonumber
\Lambda^4_7 =\{ \sigma\in \Lambda^4: J_4(\sigma)=-6 \sigma\}, \quad \Lambda^4_{35} =\{ \sigma\in \Lambda^4: J_4(\sigma)=0\}.
\ee
This will follow after we characterise each of the irreducible components below.  

\subsection{Projector on $\Lambda^4_{27}$}

In what follows it will be useful to have the projector on $\Lambda^4_{27}$ explicitly. It is clear that it is a multiple of $J_4(J_4+12\mathbb{I})(J_4+6\mathbb{I})$. Taking into account the eigenvalues of $J_4$ on different subspaces, it is not difficult to check that the required multiple is $1/224$. Thus, we have
\be
\pi_{27} = \frac{1}{224} J_4(J_4+12\mathbb{I})(J_4+6\mathbb{I}) = \frac{1}{224}( (J_4)^3+ 18 (J_4)^2 + 72 J_4).
\ee
Using (\ref{J4-cube}) and (\ref{J4-squared}) we get
\be\label{projector-27}
\pi_{27}(\sigma)_{ijkl} = 
\frac{3}{32}\Big(  \Phi_{[ij}{}^{ab} \Phi_{kl]}{}^{cd} \sigma_{abcd} -4 \Phi_{[ij}{}^{ab} \sigma_{kl]ab} \\ \nonumber
-\frac{1}{7} \Phi_{ijkl} \Phi^{abcd} \sigma_{abcd} +4 \sigma_{ijkl} \Big) .
\ee
We have explicitly checked that this projector kills 4-forms of the form
\be
H_{[i}{}^p \Phi_{jkl]p}, \qquad H\in \Lambda^1\otimes \Lambda^1,
\ee
which lie in $\Lambda^4_{1+35+7}$. This characterisation of $\Lambda^4_{1+35+7}$ is subject of the next two subsections. 

For later purposes, we note that $\mathbb{I}-\pi_{27}$ projects out the $\Lambda^4_{27}$ component of any 4-form, and is given by
\be\label{proj-away-27}
\mathbb{I}-\pi_{27}= \frac{1}{32}\Big( 20 \sigma_{ijkl} -3 \Phi_{[ij}{}^{ab} \Phi_{kl]}{}^{cd} \sigma_{abcd} \\ \nonumber 
+12 \Phi_{[ij}{}^{ab} \sigma_{kl]ab} +\frac{3}{7} \Phi_{ijkl} \Phi^{abcd} \sigma_{abcd} \Big) .
\ee

We also note that $\pi_{27}$ can be understood in a simple way. Indeed, taking a 4-form $\sigma_{ijkl}\in \Lambda^4$, we can interpret this as an object in ${\rm Sym}^2(\Lambda^2)$, and apply the projector $\pi_7$ on the indices $ij$ and on the indices $kl$. After this the result can be projected back to $\Lambda^4$ by antisymmetrising the indices. The result of this operation is
\be
\left((\pi_7 \sigma \pi_7)\Big|_{\Lambda^4}\right)_{ijkl} = \frac{1}{64} \left(  \Phi_{[ij}{}^{ab} \Phi_{kl]}{}^{cd} \sigma_{abcd} -4 \Phi_{[ij}{}^{ab} \sigma_{kl]ab} + 4 \sigma_{ijkl} \right) .
\ee
This contains almost all the terms in $\pi_{27}(\sigma)$. The only term present in (\ref{projector-27}) and absent in $(\pi_7 \sigma \pi_7)\Big|_{\Lambda^4}$ is the third term in (\ref{projector-27}), whose purpose is to make the result tracefree. So, we can write
\be
\pi_{27}(\sigma) = 6 (\pi_7 \sigma \pi_7)\Big|_{\Lambda^4} - \text{trace},
\ee
where the last term just removes the trace of the first. This gives a simple and useful interpretation of the projector $\pi_{27}$.

\subsection{An operator from $\Lambda^2$ to $\Lambda^4_7$ and its inverse}

Let us introduce the following operator
\be\label{operator-K}
\Lambda^2\ni \beta_{ij} \to K(\beta)_{ijkl} =  4\beta_{[i|p|} \Phi^p{}_{jkl]} \in \Lambda^4.
\ee
We note that the introduced operator $K$ is the diamond map from \cite{Spiro-Spin7}. It can now be checked that
\be
K\circ\pi_{21}=0, \qquad K\circ\pi_7 = K.
\ee
Both of these can be understood by noting that the image $K(\beta)\in \Lambda^4$ is precisely the orbit of the basic 4-form $\Phi$ under the action of the Lie algebra $\mathfrak{spin}(8)$. The statement that $K\circ \pi_{21}=0$ is just the statement that $\Phi$ is ${\rm Spin}(7)$ invariant. This means that ${\rm Ker}(K)=\Lambda_{21}^2$, and the image is $\Lambda^4_7$. 

To find the inverse of $K$ on $\Lambda^4_7$ let us consider 
\be\label{operator-K-prime}
\Lambda^4\ni \sigma_{ijkl} \to K'(\sigma)_{ij} = \frac{1}{2} \Phi_{i}{}^{pqr} \sigma_{jpqr} -\frac{1}{2} \Phi_{j}{}^{pqr} \sigma_{ipqr} \in \Lambda^2.
\ee
We then have 
\be
\pi_{21}\circ K' =0, \qquad K'\circ K = 96 \pi_7.
\ee
This means that $K'$ is (a multiple of) the inverse of $K$ on $\Lambda^4_7$. 

The operator $K$ to $\Lambda^4$ can be generalised and applied to a general tensor from $\Lambda^1\otimes \Lambda^1$. In particular, it can be applied to a symmetric tensor $h_{ij}\in {\rm Sym}^2(\Lambda^1)$
\be\label{h-delta-phi}
{\rm Sym}^2(\Lambda^1) \ni h_{ij} \to K(h)_{ijkl} = 4h_{[i|p|} \Phi^p{}_{jkl]}\in \Lambda^4.
\ee
The image of ${\rm Sym}^2(\Lambda^1)$ under the action of $K$ can be seen to be $\Lambda^4_1\oplus \Lambda^4_{35}$. Restricted to its image, the operator $K: {\rm Sym}^2(\Lambda^1)\to \Lambda^4$ is invertible, with the inverse being a multiple of
\be
K' : \Lambda^4 \to {\rm Sym}^2(\Lambda^1), \qquad  K'(\sigma)_{ij} =  \Phi_{(i}{}^{pqr} \sigma_{j)pqr} \in {\rm Sym}^2(\Lambda^1).
\ee

\subsection{Characterisation of $\Lambda^4_{1+35+7}$}

We can apply the map $K$ to a general element $H_{ij}\in \Lambda^1\otimes\Lambda^1$
\be
K(H)_{ijkl} := 4 H_{[i{}}^p \Phi_{jkl]p}.
\ee
We already know that $\pi_{27}( K(H)) =0$, and so the image of this map lies in $\Lambda^4_{1+35+7}$. We also know that the map $K$ applied to the symmetric part of $H$ lies in $\Lambda^4_{1+35}$, and to the anti-symmetric part in $\Lambda^4_7$. A computation gives  the following result
\be
J_4(K(H))_{ijkl} = -3 (H_{[i}{}^p - H^p{}_{[i}) \Phi_{ijk]p} -6 \Phi_{ijkl} H_p{}^p.
\ee
This shows that when $H$ is symmetric tracefree $H_{ij}=H_{(ij)}, H_p{}^p=0$ we have $J_4(K(H))=0$. This shows that $\Lambda^4_{35}$ is the eigenspace of $J_4$ of eigenvalue $0$. When $H_{ij}$ is anti-symmetric, we have $J_4(K(H)) = - 6 K(H)$, and so $\Lambda^4_7$ is eigenspace of eigenvalue $-6$. When $H_{ij}=g_{ij}$, we have $J_4(K(H))= - 12 K(H)$, and thus $\Lambda^4_1$ is eigenspace of eigenvalue $-12$. This gives the characterisation described above in (\ref{J4-eigenspaces}). 

\subsection{Characterisation of $\Lambda^4_{27}$}

It will be useful to have an explicit parametrisation of a general element of $\Lambda^4_{27}$, similar to have we already have a parametrisation of a general element of the other irreducible subspaces $\Lambda^4_{1+35+7}$. We start with a definition
\begin{definition}
We say that a matrix $\Psi$ is an element of the space of symmetric tracefree matrices
\be
\Psi^{ab cd}\in {\rm Sym}_0^2(\Lambda^2_7),
\ee
if the following requirements are satisfied:
\be
\Psi^{ab cd}= \Psi^{[ab] [cd]} = \Psi^{[cd] [ab]}, \qquad \Psi_{ab}{}^{ab}=0,  \qquad \pi_7 \Psi = \Psi = \Psi \pi_7.
\ee
\end{definition}

We can then construct a 4-form that we denote as $\Psi\Phi$ as
\be\label{Psi-Phi}
(\Psi \Phi)_{abcd} := \Psi_{[ab}{}^{pq} \Phi_{cd]pq}.
\ee
Let us show that the projection of this to $\Lambda^4_{1+35+7}$ vanishes. To compute this, we apply the map $K': \Lambda^4 \to \Lambda^1 \otimes \Lambda^1$. A calculation gives
\be\label{Psi-Phi-proj}
(\Psi \Phi)_{ipqr} \Phi^{apqr} = \delta_i^a\left( \Psi_{qr}{}^{qr} - \frac{1}{2} \Psi_{pqrs} \Phi^{pqrs}\right) \\ \nonumber 
+ 4 \Psi_{ip}{}^{ap} + \Phi_{ipqr} \Psi^{apqr} -  \Psi_{ipqr} \Phi^{apqr}.
\ee
The first term here is zero because $\Psi$ is tracefree  $\Psi_{qr}{}^{qp} =0$, and also $\pi_7 \Psi = \Psi$ means $\pi_{21} \Psi=0$, which implies
\be
3\Psi_{ijkl}+ \frac{1}{2}\Phi_{ij}{}^{pq} \Psi_{pqkl} =0.
\ee
So, if $\Psi_{qr}{}^{qp} =0$ then also $\Psi_{pqrs} \Phi^{pqrs}=0$. On the other hand, if we contract $jl$ in this expression we get
\be\label{Phi-Psi-contr}
\Psi_{k}{}^{pqr} \Phi_{ipqr} = - 6 \Psi_{ipk}{}^p.
\ee
The right-hand side is $ik$ symmetric, and thus the left-hand side must also be $ik$ symmetric. This shows that the last two terms in (\ref{Psi-Phi-proj}) cancel. It remains to characterise $\Psi_{ip}{}^{ap}$. To do this, we compute $0=\pi_{21} \Psi\pi_{21}$
\be
0= 3\Psi_{ijkl} + \frac{3}{2} \Phi_{ij}{}^{pq} \Psi_{pq kl} + \frac{3}{2} \Psi_{ij}{}^{pq} \Phi_{pqkl} + \frac{1}{4} \Phi_{ij}{}^{pq} \Phi_{kl}{}^{rs} \Psi_{pqrs}.
\ee
Taking the $jl$ contraction of this we get
\be
0= 8 \Psi_{ipk}{}^p + 2\Psi_{k}{}^{pqr} \Phi_{ipqr} + 2\Psi_{i}{}^{pqr} \Phi_{kpqr} \\ \nonumber 
+ \frac{1}{2} g_{ik} \left( \Psi_{pq}{}^{pq} - \frac{1}{2} \Psi_{pqrs} \Phi^{pqrs}\right).
\ee
Using $\Psi_{qr}{}^{qp} =0, \Psi_{pqrs} \Phi^{pqrs}=0$ as well as (\ref{Phi-Psi-contr}) here we see that $\Psi_{ipk}{}^p=0$. All in all, all the terms in (\ref{Psi-Phi-proj}) are zero and the object  (\ref{Psi-Phi}) is in $\Lambda^4_{27}$. This gives the desired parametrisation of a general element of $\Lambda^4_{27}$.

\section{Linearised theory}
\label{sec:lin}

In this section we address the question as to what is the most general action invariant under diffeomorphisms that can be constructed for the fields living in the representations $\Lambda^4_{1+7+35}$ of the group ${\rm Spin}(7)$. We use physics terminology here, in which a field is a tensor that transforms in some (not necessarily irreducible) representation of the relevant Lie group. In our case the fields in $\Lambda^4_{1+35}\sim {\rm Sym}^2(\Lambda^1)$ encode perturbations $h_{ab}$ of a metric tensor, and $\Lambda^4_{7}\sim \Lambda^2_7$ is an additional field. It is well-known that there is a unique quadratic in $h_{ab}$ and second-order in derivatives diffeomorphism invariant action. This holds true in any dimension, and the argument to this effect will be given below. We will see that, in contrast, there is no longer a unique diffeomorphism-invariant action for ${\rm Sym}^2(\Lambda^1)$ and $\Lambda^2_7$ fields. There are two independent possible linearised actions that can be constructed. Non-linear completion of the theories described here is the subject of the following sections. 

\subsection{The usual metric only case}

This story is standard, and works in exactly the same way in any dimension. We review it for completeness, and for establishing the main idea of the calculation to follow in the ${\rm Spin}(7)$ case. For concreteness, we do calculations in dimension eight, but the story repeats itself with no changes in any dimension. 

In the usual gravity case one considers fields transforming with respect to the Lorentz group ${\rm SO}(8)$. The metric perturbation contains two irreducible representations ${\bf 1}, {\bf 35}_v$. The subscript $v$ stands for 'vector', to distinguish them from also possible spinor representations. This is a standard notation at least in some literature. As is also standard, we refer to the irreducible representations by their dimensions written in bold face. Let us denote the fields in representations ${\bf 1}, {\bf 35}_v$ by $h,\tilde{h}_{ab}$ respectively. We use the same letter to refer to both fields because later it will be convenient to combine them together into a single symmetric tensor $h_{ab}$. We are interested in an action that contains two derivatives. It will be useful to think in terms of Fourier transform, and denote the derivative by its Fourier transform $p^a$ at intermediate stages of the computation. The two most obvious action terms one can construct are of the type $p^2 h^2, p^2 (\tilde{h}_{ab})^2$, where the notations used are $p^2 = p^c p_c$ and $(\tilde{h}_{ab})^2 = \tilde{h}_{ab} \tilde{h}^{ab}$. To analyse the other possible terms we need to decompose the product of two derivatives into irreducibles, taking into account that they commute. We have
\be
{\bf 8}_v \otimes_S {\bf 8}_v = {\bf 1} + {\bf 35}_v.
\ee
The trivial representation here corresponds to $p^2$, and the other representation is $p_a p_b$ with the trace removed. The $p^2$ terms were already taken into account, so we only need to consider the possible couplings between $p_a p_b$ and the two other factors of either $h$ or $\tilde{h}$. There is no term with $p_a p_b$ and two factors of $h$. There is clearly a mixed term $h p^a p^b \tilde{h}_{ab}$. To determine possible terms with two factors of $\tilde{h}$ we need 
\be
{\bf 35}_v \otimes_S {\bf 35}_v = {\bf 1} + {\bf 35}_v + {\bf 294}_v + {\bf 300}.
\ee
There is only a single occurrence of ${\bf 35}_v$ here, which means that there is only a single term that does not reduce to $p^2 (\tilde{h}_{ab})^2$, and this is $(p^a \tilde{h}_{ab})^2$. 

All in all, there are just 4 possible terms in the action that one can write. We now go back to the notation that uses the derivative operators, and write the linear combination of the above four terms with arbitrary coefficients
\be
{\mathcal L} = \frac{1}{2} \tilde{h}^{bc} \partial^a \partial_a \tilde{h}_{bc} + \frac{\alpha}{2} h \partial^a \partial_a h + \beta h \partial^a \partial^b \tilde{h}_{ab} + \gamma (\partial^a \tilde{h}_{ab})^2.
\ee
A note is in order about our notations here. The notation $(t_{a_1 \ldots a_k})^2$, where $t_{a_1 \ldots a_k}$ is an arbitrary tensor, means $t_{a_1 \ldots a_k} t^{a_1 \ldots a_k}$. This explains the last term, and similar type Lagrangian terms that will be written below. 
We have chosen the coefficient in front of the first term to be $1/2$, which we can always do by changing an overall coefficient in front of the Lagrangian $\tilde{h}_{ab}$. At this stage it will be more convenient to introduce the fields
\be
h_{ab} : = \tilde{h}_{ab} + \frac{1}{8} \eta_{ab} h,
\ee
so that $\tilde{h}_{ab}$ is the tracefree part of $h_{ab}$ and $h = \eta^{ab} h_{ab}$. It is clear that the Lagrangian retains the same general form, except that the coefficients change. We will give the new coefficients the same name, hoping it will not lead to any confusion. The Lagrangian in terms of $h_{ab}$ is
\be
{\mathcal L} = \frac{1}{2} (\partial_a h_{bc})^2 + \frac{\alpha}{2}  (\partial_a h)^2 - \beta h \partial^a \partial^b h_{ab} - \gamma (\partial^a h_{ab})^2.
\ee
We now demand diffeomorphism invariance of the action, with the field transformation properties being
\be
h_{ab} = \partial_{(a} \xi_{b)}.  
\ee
Note that this encapsulates transformation properties of both $h,\tilde{h}_{ab}$. In particular $\delta h = \partial^c \xi_c$. 

We now perform the variation, and set coefficients in front of independent terms to zero, allowing integration by parts. This results in the following set of coefficients
\be
\gamma=\beta=1, \quad  \alpha = - 1.
\ee
Thus, the unique (modulo field rescaling) Lagrangian that is diffeomorphism-invariant reads
\be\label{lin-GR}
{\mathcal L}_{GR} = \frac{1}{2} (\partial_a h_{bc})^2 - \frac{1}{2}  (\partial_a h)^2 -  h \partial^a \partial^b h_{ab} -  (\partial^a h_{ab})^2,
\ee
which is the standard result. 

\subsection{Gauge-fixing}

Let us also derive the standard gauge-fixed form of the Lagrangian. Completing the square in the $(\partial^a h_{ab})^2$ part, we can rewrite the Lagrangian as
\be
{\mathcal L}_{GR} = \frac{1}{2}  (\partial_a h_{bc})^2 - \frac{1}{4}  (\partial_a h)^2 - ( \partial^a ( h_{ab} -\frac{1}{2} \eta_{ab} h ))^2 .
\ee
If we gauge-fix the diffeomorphisms by setting 
\be
 \partial^a ( h_{ab} -\frac{1}{2} \eta_{ab} h )=0,
 \ee
 we get a simple linear combination of the terms containing $\partial^a \partial_a$ only. 
 
\subsection{The case of ${\rm Spin}(7)$-structures}

Let us now consider 3 fields in irreducible representations of ${\rm Spin}(7)$ given by ${\bf 1, 7, 35}$. These are precisely the representations appearing in a tangent vector to the ${\rm GL}(8)$ orbit of Cayley forms. We will refer to these fields as $h, \xi, \tilde{h}$ respectively. The decomposition into irreducibles is now dictated by the ${\rm Spin}(7)$ representation theory. There are again terms involving $p^2$, which are $p^2 h^2, p^2 \xi^2, p^2 \tilde{h}^2$. To determine other possible terms we need to consider the (symmetric) product of two derivatives.
We have
\be
{\bf 8}\otimes_S {\bf 8} = {\bf 1}+{\bf 35},
\ee
which is unchanged from the ${\rm Spin}(8)$ case. The trivial representation here corresponds to $p^2$, and so we only need to consider the ${\bf 35}$ representation. This must couple to the product of two fields from the list $h, \xi, \tilde{h}$. The non-trivial such decompositions are 
\be
{\bf 7}\otimes_S {\bf 7}= {\bf 1}+{\bf 27}, \\ \nonumber
{\bf 7}\otimes {\bf 35} = {\bf 21}+{\bf 35}+{\bf 189}, \\ \nonumber
{\bf 35}\otimes_S {\bf 35} = {\bf 1} +{\bf 27} + {\bf 35} + {\bf 105} +{\bf 168}+{\bf 294}.
\ee
We are looking for every occurrence of the representation ${\bf 35}$ here. We already know that the terms $h p^a p^b \tilde{h}_{ab}, (p^a \tilde{h}_{ab})^2$ are possible. The second line above shows that there is a new term of the type $p^a p^b \xi \tilde{h}$. It is easy to write down this term by noting that the representation ${\bf 7}$ appears in the anti-symmetric part of the tensor product 
\be
{\bf 35}\otimes_A {\bf 35} = {\bf 7} +{\bf 21} + {\bf 35} + {\bf 189} +{\bf 378}.
\ee
We already know that the best way to describe a field in representation ${\bf 7}$ is by using a field in $\Lambda^2_7$. Thus, let us introduce an object 
\be
\xi_{ab} \in \Lambda^2_7.
\ee
We can then construct the term coupling $\xi_{ab}, \tilde{h}_{ab}$ as $p_a p^c h_{cb} \xi^{ab}$. 

Going back to the notation that involves partial derivatives, it is now clear that there are just two terms that can be constructed from $\xi_{ab}$. These can be written as $\xi^{ab} \partial^c \partial_c \xi_{ab}$ and $\partial_b \tilde{h}^{ba} \partial^c \xi_{ca}$. Note that we can also write the second term as $\partial_b h^{ba} \partial^c \xi_{ca}$, because the trace part of $h_{ab}$ does not couple to $\xi_{ab}$. 
As before, we now write a general linear combination of all the possible terms, with arbitrary coefficients:
\be\label{L-xi}
{\mathcal L}= \frac{\rho}{2} (\partial_a h_{bc})^2 + \frac{\alpha}{2} ( \partial_a h)^2 - \beta h \partial^a \partial^b h_{ab} - \gamma (\partial^a h_{ab})^2 \\ \nonumber
+\frac{\lambda}{2} ( \partial_a \xi_{bc} )^2- \mu \partial_b h^{ba} \partial^c \xi_{ca}.
\ee
As before the notation $(t_{a_1 \ldots a_k})^2$ for any tensor means the complete contraction of two copies of this tensor. 
It will be convenient to put an arbitrary coefficient $\rho$ also in front of the first term. This will allow us to write the general diffeomorphism-invariant Lagrangian as a linear combination of two separately invariant Lagrangians. 

\subsection{Some identities}

Let us now explain why the term $(\partial^a \xi_{ab})^2$ is not added to the Lagrangian (\ref{L-xi}). The representation theory tells us that there is no representation ${\bf 35}$ in the decomposition ${\bf 7}\otimes_S {\bf 7}$, and so this term must be a multiple of $\xi^{bc} \partial^a \partial_a \xi_{bc}$. Let us confirm that. Using the fact that $\xi\in \Lambda^2_7$ we have
\be
\xi_{ab} = - \frac{1}{6} \Phi_{ab}{}^{pq} \xi_{pq},
\ee
and so
\be
\xi_b{}^a \xi_{ca} = \frac{1}{36}\left( - 28 \xi_b{}^a \xi_{ca}  + 8 g_{bc} (\xi_{pq})^2\right).
\ee
From this we get
\be
\xi_b{}^a \xi_{ca} =\frac{1}{8}  g_{bc} (\xi_{pq})^2.
\ee
This explains why the term $(\partial^a \xi_{ab})^2$ is already contained in the $\xi^{bc} \partial^a \partial_a \xi_{bc}$ term in the Lagrangian (\ref{L-xi}) and does not need to be added as a separate term. 

\subsection{Gauge-fixing}

We note that there is a gauge in which the general Lagrangian is given by a sum of terms only involving the Laplacians. Indeed, for $\gamma\not=0$ we can rewrite the Lagrangian as
\be\label{elliptic-gauge}
{\mathcal L}= \frac{\rho}{2} ( \partial_a h_{bc})^2 + \left( \frac{\alpha}{2} + \frac{\beta^2}{4\gamma}\right) ( \partial_a h)^2 
+\left( \frac{\lambda}{2} + \frac{\mu^2}{32\gamma}\right)( \partial_a \xi_{bc})^2 \\ \nonumber
- \gamma (\partial^a (h_{ab} - \frac{\beta}{2\gamma} \eta_{ab} h + \frac{\mu}{2\gamma} \xi_{ab}))^2.
\ee

\subsection{The transformation properties under diffeomorphisms}

To determine the diffeomorphism transformation rules for all the fields we recall that $h_{ab}$ and $\xi_{ab}$ appear from a certain projection of the perturbation of the 4-form. If we call this perturbation $\phi\in \Lambda^4$, the fact that this 4-form is a tangent vector to the orbit of Cayley 4-forms means that $\phi\in \Lambda^4_{1+7+35}$. Let us define the fields $h_{ab}, \xi_{ab}$ as
\be\label{field-defs}
\tilde{h}_{ab} = \frac{1}{24}( \phi_{(a}{}^{pqr} \Phi_{b)pqr} - \frac{1}{8}\eta_{ab} \phi^{pqrs}\Phi_{pqrs}), \\ \nonumber
 \xi_{ab} = \frac{1}{24} \phi_{[a}{}^{pqr} \Phi_{b]pqr}, \quad h = \frac{1}{168}  \phi^{pqrs}\Phi_{pqrs}.
\ee
We emphasise that here and till the end of this section $\Phi$ is the background, which is assumed to be constant, and $\phi$ is the perturbation. 
A calculation shows that the inverse of this map is the following parametrisation of $\phi_{abcd}$
\be\label{phi-param}
\phi_{abcd} = -4 ( h_{[a}{}^{p} +  \frac{1}{4} \xi_{[a}{}^{p}) \Phi_{bcd] p},
\ee
and this formula explains the choice of prefactors in (\ref{field-defs}). We note that
\be
\frac{1}{96} \phi^{abcd} \phi_{abcd} = h^{ab} h_{ab} + \frac{3}{4} h^2 + \frac{1}{4} \xi^{ab} \xi_{ab},
\ee
where it is used that $\xi_{ab}\in \Lambda^2_7$.

Under diffeomorphisms 
\be
\delta \phi = i_\xi d \Phi + d i_\xi \Phi.
\ee
We assume that the background 4-form $\Phi$ is closed (in fact constant), so that there is only the second term. Then
\be\label{lin-diffeo}
\delta \phi_{abcd} = - 4 \partial_{[a} \xi^p \Phi_{bcd] p},
\ee
and so $(1/4) \delta \xi_{ab} = \pi_7(\partial_{[a} \xi_{b]})$ giving 
\be
\delta h_{ab} =  \partial_{(a} \xi_{b)}, \qquad \delta \xi_{ab} = \partial_{[a} \xi_{b]} - \frac{1}{2} \Phi_{ab}{}^{pq} \partial_p \xi_q.
\ee

\subsection{Determining the diffeomorphism-invariant Lagrangian}

The variation of the Lagrangian (\ref{L-xi}), modulo surface terms, is given by 
\be
\delta {\mathcal L} = (  \rho - \gamma - \frac{\mu}{2}) \partial^a h_{ab} \partial^2 \xi^b + (- \beta + \gamma - \frac{\mu}{2}) \partial^a \partial^b h_{ab} (\partial\xi) 
\\ \nonumber
-( \alpha +\beta) \partial^2 h (\partial\xi)
+ ( 4\lambda - \frac{\mu}{2}) \partial^a \xi_{ab} \partial^2 \xi^b .
\ee
Here $\partial^2 = \partial^a \partial_a$ and $(\partial \xi)=\partial^a \xi_a$.
We have used the fact that $\xi_{ab}$ is in $\Lambda^2_7$, and so $(1/2) \Phi_{ab}{}^{pq} \xi_{pq} = - 3\xi_{ab}$. Setting to zero the coefficients in front of the independent parts we get a system of equations. The solution depends on two of the parameters, for which we can take $\rho, \mu$. Then
\be\label{parameters}
\alpha = -\rho+\mu, \quad \beta = \rho-\mu, \quad \gamma = \rho -\frac{\mu}{2}, \quad \lambda= \frac{\mu}{8}.
\ee
It is clear that the resulting diffeomorphism-invariant Lagrangian is the sum of two separately invariant terms
\be\label{L-diff}
{\mathcal L}=  \rho {\mathcal L}_{GR} +\mu {\mathcal L}',
\ee
where
\be
{\mathcal L}' =  \frac{1}{2} ( \partial_a h)^2 + h \partial^a \partial^b h_{ab} +\frac{1}{2} (\partial^a h_{ab})^2
+\frac{1}{16}( \partial_c \xi_{ab})^2 -  \partial_b h^{ba} \partial^c \xi_{ca}.
\ee
We thus observe that the linearised action in the case of ${\rm Spin}(7)$-structures is not unique. There are two linearly independent such actions, and the general action is given by their linear combination. One of the parameters can always be absorbed into the perturbation of the 4-form field, but the other parameter remains. 

It is interesting to remark that the story we described parallels precisely the story one finds in the case of ${\rm SU}(2)$ structures in four dimensions, see \cite{Bhoja:2024xbe} section 6. In four dimensions there are also two diffeomorphism-invariant terms that can be written down for perturbations of an ${\rm SU}(2)$ structure. The main difference with four dimensions is that in that case there is an additional symmetry that can be invoked, namely ${\rm SU}(2)$ gauge transformations, that allows to eliminate one of the two independent terms. In the case of four dimensions the origins of this extra symmetry are in the fact that $\mathfrak{so}(4)$ splits as $\mathfrak{so}(4)=\mathfrak{g}\oplus \mathfrak{g}^\perp$, and $\mathfrak{g}^\perp$ is also a Lie algebra. It is the requirement of gauge invariance with respect to $\mathfrak{g}^\perp$ that eliminates one of the two possible terms in the case of 4D. In the present case of eight dimensions $\mathfrak{g}^\perp$ is not a Lie algebra, and no similar requirement of gauge invariance is possible. However, it is not impossible that for some special value of the ratio $\mu/\rho$ an extra gauge symmetry arises in the theory described by (\ref{L-diff}), and this selects a preferred member of the family of theories. At the moment of writing this remark we do not know whether this is the case, but this possibility is under investigation.

\section{Intrinsic torsion}

We now proceed to our construction of the non-linear theories completing the linear story described above. The purpose of this section is to recall the definition of the intrinsic torsion of a ${\rm Spin}(7)$-structure and establish some facts that are necessary for the following. 

\subsection{Characterisation of the intrinsic torsion}

We start with the following proposition, whose proof can also be found in \cite{Spiro-Spin7}.
\begin{proposition} The intrinsic torsion of a ${\rm Spin}(7)$-structure, measured by $\nabla_a \Phi_{ijkl}$, where $\nabla_a$ is the metric-compatible covariant derivative, takes values in $\Lambda^1\otimes \Lambda^4_7$. Using the isomorphism $\Lambda^2_7 \sim \Lambda^4_7$ provided by the operator $K$, see (\ref{operator-K}), the intrinsic torsion can be parametrised by an object in $\Lambda^1\otimes \Lambda^2_7$. Explicitly,
\be\label{torsion}
\nabla_a \Phi_{ijkl} = T_{a; ip} \Phi^p{}_{jkl} - T_{a; jp} \Phi^p{}_{kli}+T_{a; kp} \Phi^p{}_{lij}- T_{a; lp} \Phi^p{}_{ijk}, \\ \nonumber
 T_{a;ij}\in \Lambda^1\otimes \Lambda^2_7.
\ee
\end{proposition}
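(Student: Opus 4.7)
The plan is to exploit metric-compatibility of the Levi-Civita connection, which forces $\nabla_a \Phi$ to be pointwise tangent to the orbit of $\Phi$ under the group of local isometries of $(M,g)$.

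First, I would identify the tangent space to this orbit. At each point $x$, the set of Cayley forms on $T_xM$ that determine the fixed metric $g_x$ is a single orbit of $\mathrm{O}(T_xM, g_x)$, of dimension $\dim \mathrm{O}(8) - \dim \mathrm{Spin}(7) = 28 - 21 = 7$. The infinitesimal action of $\omega \in \Lambda^2 \cong \mathfrak{so}(g_x)$ on a 4-form is
\[
(\omega \cdot \Phi)_{ijkl} = \omega_i{}^p \Phi_{pjkl} + \omega_j{}^p \Phi_{ipkl} + \omega_k{}^p \Phi_{ijpl} + \omega_l{}^p \Phi_{ijkp},
\]
which, using the antisymmetry of $\Phi$ in its remaining three slots, coincides up to a numerical factor with the operator $K$ of (\ref{operator-K}). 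The identity $K\circ \pi_{21}=0$ noted after (\ref{operator-K}) is exactly the infinitesimal $\mathrm{Spin}(7)$-invariance of $\Phi$, so the induced map $K : \Lambda^2_7 \to \Lambda^4_7$ is the required linear isomorphism between the orbit's tangent space and $\Lambda^4_7$.

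Second, I would show that $\nabla_a \Phi|_x$ lies in this orbit tangent space. Pick a curve $\gamma(t)$ with $\gamma(0)=x$, $\dot\gamma(0) = e_a$, and let $P_t : T_xM \to T_{\gamma(t)}M$ denote parallel transport along $\gamma$. Since $\nabla g = 0$, the map $P_t$ is an isometry, hence the pulled-back form $P_t^*\Phi_{\gamma(t)}$ is a Cayley form on $(T_xM, g_x)$ and stays in the single orbit $\mathrm{O}(T_xM, g_x) \cdot \Phi_x \subset \Lambda^4 T_x^*M$. Differentiating at $t=0$,
\[
\nabla_a\Phi\big|_x = \left.\tfrac{d}{dt}\right|_{t=0} P_t^*\Phi_{\gamma(t)} \in T_{\Phi_x}\!\bigl(\mathrm{O}(T_xM)\cdot \Phi_x\bigr) \cong \Lambda^4_7\big|_x.
\]
Since $a$ is arbitrary, $\nabla\Phi$ is a section of $\Lambda^1 \otimes \Lambda^4_7$.

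Finally, applying the inverse of $K : \Lambda^2_7 \to \Lambda^4_7$ slotwise in the 4-form index yields a unique $T_{a;ij} \in \Lambda^1 \otimes \Lambda^2_7$ such that $\nabla_a\Phi = K(T_a)$, and expanding $K(T_a)_{ijkl} = 4 T_{a;[i|p|}\Phi^p{}_{jkl]}$ using the antisymmetry of $\Phi$ in its last three slots reproduces the explicit formula (\ref{torsion}). The argument is essentially conceptual; the only mildly tedious step is matching the sign and overall numerical factor in the identification $(\omega \cdot \Phi) = c\, K(\omega)$, which fixes the normalization of $T$ without affecting the structural claim.
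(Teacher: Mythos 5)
Your proof is correct, and it takes a genuinely different route from the paper. The paper proceeds by brute force within the index formalism: it shows that the $\Lambda^4_{1+35}$ projection of $\nabla_a\Phi$ vanishes because $\Phi_{(i}{}^{pqr}\nabla_{|a|}\Phi_{j)pqr}=21\nabla_a g_{ij}=0$, and then kills the $\Lambda^4_{27}$ projection by differentiating the contraction identities of $\Phi$ to derive relations such as $\Phi_{[ij}{}^{ab}\nabla_{|p|}\Phi_{kl]ab}=-2\nabla_p\Phi_{ijkl}$ and feeding them into the explicit projector (\ref{projector-27}). You instead invoke the standard intrinsic-torsion argument for $G$-structures: metric compatibility makes parallel transport an isometry, so $\nabla_a\Phi$ is tangent to the orbit of Cayley forms inducing the fixed metric, and that tangent space is the image of the infinitesimal $\mathfrak{so}(8)$-action, which equals $K(\Lambda^2_7)=\Lambda^4_7$ by $K\circ\pi_{21}=0$ (infinitesimal $\mathrm{Spin}(7)$-invariance). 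Your approach is shorter, explains conceptually why the answer is $\Lambda^1\otimes\bigl(\mathfrak{so}(8)/\mathfrak{spin}(7)\bigr)$, and generalizes verbatim to other $G$-structures; the paper's computation is self-contained in its index calculus and yields as byproducts identities that are reused later in the text. Two small points you should make explicit to fully close your argument: the uniqueness of $T_{a;ij}$ requires injectivity of $K$ on $\Lambda^2_7$, which is supplied by $K'\circ K=96\,\pi_7$; and the $\mathrm{O}(8)$-orbit is disconnected (since $\mathrm{Spin}(7)\subset \mathrm{SO}(8)$), but the curve $P_t^*\Phi_{\gamma(t)}$ stays by continuity in the $\mathrm{SO}(8)$-component, whose tangent space is the same, so this does not affect the conclusion.
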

We remark that we use the same notation for the intrinsic torsion $T_{a;ij}$ as in \cite{Spiro-Spin7}. The semi-colon here should not be confused with the symbol denoting the covariant derivative, which is standard in some physics literature. We never use this notation for the covariant derivative in this paper, and thus we hope that no confusion arises. 
\begin{proof}
The proof of this proposition consists in showing that the projections of $\nabla_a \Phi_{ijkl} \in \Lambda^1\otimes \Lambda^4$ to all other irreducible components of $\Lambda^4$ apart from $\Lambda^4_7$ vanish. It is given in \cite{Spiro-Spin7}, and similar computations in the case of $G_2$ structures are spelled out in \cite{Spiro-G2}. We spell out an alternative, completely explicit proof, which is made possible by our knowledge of the projections to $\Lambda^4_{35+1}$ and the expression (\ref{projector-27}) for the projector to $\Lambda^4_{27}$. The projection to $\Lambda^4_{35+1}$ is obtained by computing 
\be
2\Phi_{(i}{}^{pqr} \nabla_{|a|} \Phi_{j)pqr} = \nabla_a \Phi_{i}{}^{pqr}\Phi_{jpqr} = 42 \nabla_a g_{ij} =0.
\ee
For the projection on $\Lambda^4_{27}$ the computation is a bit more involved. First, we need some identities. We have, on one hand
\be\nonumber
\nabla_p ( \Phi_{[ij}{}^{ab} \Phi_{kl]}{}^{cd} \Phi_{abcd}) =  \\ \nonumber
\Phi_{[kl}{}^{cd} \Phi_{|abcd}  \nabla_{p|} ( \Phi_{ij]}{}^{ab}) + \Phi_{[ij}{}^{ab} \Phi_{|abcd} \nabla_{p|} ( \Phi_{kl]}{}^{cd} )
+ \Phi_{[ij}{}^{ab} \Phi_{kl]}{}^{cd} \nabla_p \Phi_{abcd}=\\ \nonumber
12 \delta_{[k}^a \delta_{l}^b \nabla_{|p|} \Phi_{ij]ab} - 4 \Phi_{[kl}{}^{ab} \nabla_{|p|} \Phi_{ij]ab}  + 12 \delta_{[i}^a \delta_{j}^b \nabla_{|p|} \Phi_{kl]ab} - 4 \Phi_{[ij}{}^{ab} \nabla_{|p|} \Phi_{kl]ab} 
\\ \nonumber + \Phi_{[ij}{}^{ab} \Phi_{kl]}{}^{cd} \nabla_p \Phi_{abcd}
= 24 \nabla_{p} \Phi_{ijkl} - 8 \Phi_{[ij}{}^{ab} \nabla_{|p|} \Phi_{kl]ab} 
+ \Phi_{[ij}{}^{ab} \Phi_{kl]}{}^{cd} \nabla_p \Phi_{abcd}.
\ee
On the other hand
\be
\nabla_p ( \Phi_{[ij}{}^{ab} \Phi_{kl]}{}^{cd} \Phi_{abcd}) = 28 \nabla_p \Phi_{ijkl}.
\ee
Thus, we have
\be
\Phi_{[ij}{}^{ab} \Phi_{kl]}{}^{cd} \nabla_p \Phi_{abcd} =4 \nabla_p \Phi_{ijkl}+ 8 \Phi_{[ij}{}^{ab} \nabla_{|p|} \Phi_{kl]ab} .
\ee
We also have 
\be
\nabla_p (\Phi_{[ij}{}^{ab} \Phi_{kl]ab}) = 2 \Phi_{[ij}{}^{ab} ( \nabla_{|p|} \Phi_{kl]ab}).
\ee
On the other hand, 
\be
\nabla_p (\Phi_{[ij}{}^{ab} \Phi_{kl]ab}) = - 4 \nabla_p \Phi_{ijkl},
\ee
and so
\be
\Phi_{[ij}{}^{ab} ( \nabla_{|p|} \Phi_{kl]ab}) = - 2\nabla_p \Phi_{ijkl}, \\ \nonumber
\Phi_{[ij}{}^{ab} \Phi_{kl]}{}^{cd} \nabla_p \Phi_{abcd} =-12 \nabla_p \Phi_{ijkl}.
\ee
Using (\ref{projector-27}), these identities, as well as $\Phi^{abcd} \nabla_p \Phi_{abcd} =0$, it is easy to see that
\be
\pi_{27} (\nabla_p \Phi_{ijkl}) = 0.
\ee
Finally, to establish (\ref{torsion}) we just need to recall that a general element of $\Lambda^4_7$ can be parametrised as $K(\beta), \beta\in \Lambda^2_7$, where $K:\Lambda^2_7\to \Lambda^4_7$ is the map introduced in (\ref{operator-K}). We thus have
\be
\nabla_a \Phi_{ijkl} = -4 T_{a; [i|p|} \Phi_{jkl]}{}^p,
\ee
where $T_{a;ij}\in \Lambda^1\otimes \Lambda^2_7$. This is precisely the formula (\ref{torsion}). 
\end{proof}

\subsection{Parametrisation by the torsion 3-form}

As is known, see e.g. \cite{Friedrich} Example 3.4., the spaces $\Lambda^1\otimes \Lambda^2_7$ and $\Lambda^3$ are isomorphic. We can make this isomorphism explicit, in one direction, by parametrising the intrinsic torsion $T_{a;ij}$ as follows
\be\label{torsion-parametrised}
T_{a;ij} = \pi_7(T_{aij}) = \frac{1}{4} T_{aij} - \frac{1}{8} \Phi_{ij}{}^{kl} T_{akl}, \qquad T_{aij}\in \Lambda^3.
\ee
An explicit relation in the other direction is
\be
T_{aij} = \frac{4}{3} T_{a;ij} + 4 T_{[a;ij]} +  T_{[a;kl]} \Phi_{ij}{}^{kl} + \frac{2}{9} T_{k;lm} \Phi^{klm}{}_{[i} g_{j]a}.
\ee

Using this parametrisation, we can rewrite (\ref{torsion}) in terms of the torsion 3-form.
\begin{proposition} 
In the parametrisation of the intrinsic torsion by a torsion 3-form, we have
\be\label{torsion-3-form}
\nabla_a \Phi_{ijkl} = T_{aip} \Phi_{pjkl} - T_{ajp} \Phi_{pkli}+T_{akp} \Phi_{plij}- T_{alp} \Phi_{pijk}, \\ \nonumber
T_{aij}\in \Lambda^3.
\ee
\end{proposition}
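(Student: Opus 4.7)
The plan is to show that the right-hand side of (\ref{torsion-3-form}) equals that of (\ref{torsion}) once the parametrisation (\ref{torsion-parametrised}) is substituted. The key observation is that both expressions can be identified as the image of a 2-form (in the indices $(i,p)$, for each fixed $a$) under the operator $K:\Lambda^2\to\Lambda^4_7$ of (\ref{operator-K}). Since $K\circ\pi_{21}=0$ by the properties of $K$ established earlier, this operator cannot distinguish between $T_{a;ij}\in\Lambda^2_7$ and the full 3-form value $T_{aij}$ treated as a 2-form in $(i,j)$.

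Concretely, I would start from $\nabla_a\Phi_{ijkl}=-4T_{a;[i|p|}\Phi_{jkl]}{}^p$, which is the last formula in the proof of the previous proposition. Using the total antisymmetry of $\Phi$, which implies $\Phi_{jkl}{}^p=-\Phi^p{}_{jkl}$ (three transpositions), this can be rewritten as $+4T_{a;[i|p|}\Phi^p{}_{jkl]}$. For each fixed $a$, this is exactly $K(T_{a;\cdot\cdot})_{ijkl}$. Substituting $T_{a;\cdot\cdot}=\pi_7(T_{a\cdot\cdot})$ from (\ref{torsion-parametrised}) and invoking $K=K\circ\pi_7$ gives
\begin{equation*}
\nabla_a\Phi_{ijkl}=K(\pi_7(T_a))_{ijkl}=K(T_a)_{ijkl}=4T_{a[i|p|}\Phi^p{}_{jkl]},
\end{equation*}
where on the right-hand side $T_{aip}$ is now the full 3-form, with no projection on the last two slots.

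The remaining step is to expand the antisymmetrisation. Since $\Phi^p{}_{jkl}$ is already antisymmetric in $(j,k,l)$, the antisymmetrisation over $[ijkl]$ reduces to the four terms obtained by moving the free index into each of the four positions with alternating signs, giving $T_{aip}\Phi^p{}_{jkl}-T_{ajp}\Phi^p{}_{ikl}+T_{akp}\Phi^p{}_{ijl}-T_{alp}\Phi^p{}_{ijk}$. Cyclic rotations of three indices are even permutations of $\Phi$, which allows me to rearrange the non-$p$ triples into the cyclic order $(jkl),(kli),(lij),(ijk)$ without further sign changes, thereby reproducing the formula in (\ref{torsion-3-form}). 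The only real care needed anywhere in the argument is in tracking the signs coming from the total antisymmetry of $\Phi$, in particular in the step that converts $\Phi_{jkl}{}^p$ to $-\Phi^p{}_{jkl}$; beyond this purely combinatorial bookkeeping I do not anticipate a genuine obstacle.
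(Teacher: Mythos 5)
Your proposal is correct and follows exactly the route the paper intends (the paper states this proposition without an explicit proof, relying on the remark that the formula is unchanged under $T_{a;ij}\to T_{aij}$): you substitute the parametrisation $T_{a;ij}=\pi_7(T_{aij})$ of (\ref{torsion-parametrised}) into (\ref{torsion}), recognise the right-hand side as $K(T_{a;\cdot\cdot})$ for each fixed $a$, and use $K\circ\pi_7=K$ (equivalently $K\circ\pi_{21}=0$) to drop the projection. Your sign bookkeeping ($\Phi_{jkl}{}^p=-\Phi^p{}_{jkl}$, and the cyclic reordering of the triples in the final expansion) checks out, so you have in effect supplied the justification the paper leaves implicit.
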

A proof is by explicit verification, substituting (\ref{torsion-parametrised}) into (\ref{torsion}). Note that this is the same formula for the covariant derivative of the basic 4-form, but now with the torsion 3-form instead of the object $T_{a;ij}\in \Lambda^1\otimes\Lambda^2_7$. We would like to emphasise that the approach of this paper to a large extent depends on the existence of the formula (\ref{torsion-3-form}). 

\subsection{Connection with skew-symmetric torsion}

As is known, see \cite{Ivanov}, \cite{Friedrich}, any ${\rm Spin}(7)$-structure on an 8-dimensional manifold admits a unique connection with totally skew-symmetric torsion. Such a connection is given by
\be
\tilde{\nabla}_a X_i = \nabla_a X_i - T_{aip} X_p.
\ee
It is then clear that the relation (\ref{torsion-3-form}) can be interpreted as the statement that the 4-form is parallel with respect to $\tilde{\nabla}$
\be
\tilde{\nabla}_a \Phi_{ijkl} =0.
\ee
The known existence of a unique connection with totally skew-symmetric torsion thus gives an alternative justification why the formula (\ref{torsion-3-form}) must exist. 

\subsection{Torsion 3-form from the exterior derivative of the Cayley form}

\begin{proposition} The torsion 3-form is completely determined by the exterior derivative $d\Phi$. Explicitly, we have
\be\label{T-d-Phi}
T=  \frac{1}{2} J_3^{-1}(\star (d\Phi)) ,
\ee
where $J_3$ is the operator in 3-forms introduced in (\ref{J3}), and $\star (d\Phi)$ is the Hodge dual of $d\Phi$.
\end{proposition}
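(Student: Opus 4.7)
The plan is to compute $(\star d\Phi)_{abc}$ directly from the torsion identity in (\ref{torsion-3-form}), $\nabla_a\Phi_{ijkl}=4T_{a[i|p|}\Phi^p{}_{|jkl]}$, and then recognise the result as a constant multiple of $J_3(T)_{abc}$; since $J_3$ is invertible by (\ref{J3-inv}), this delivers the claimed inversion. No geometric input beyond the previous proposition is needed; the entire argument is algebraic manipulation of the $\epsilon$--$\Phi$ algebra established earlier.

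First I would write $(d\Phi)_{abcde}$ as a multiple of $\nabla_{[a}\Phi_{bcde]}$ and substitute the torsion formula. Because the inner antisymmetrisation over $bcde$ is absorbed by the outer antisymmetrisation over all five indices, $(d\Phi)_{abcde}$ is, up to a combinatorial factor, $T_{[ab|p|}\Phi^p{}_{|cde]}$. Taking the Hodge dual,
\begin{equation*}
(\star d\Phi)_{abc}=\frac{1}{5!}\epsilon_{abc}{}^{defgh}(d\Phi)_{defgh},
\end{equation*}
the antisymmetrisation over $defgh$ is absorbed by the $\epsilon$-tensor, leaving a constant multiple of $\epsilon_{abc}{}^{defgh}T_{dep}\Phi^p{}_{fgh}$.

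At this stage I would invoke the key identity (\ref{epsilon-phi-3}), $\epsilon^{aijklpqr}\Phi_{bpqr}=30\,\delta^{[a}_b\Phi^{ijkl]}$, which reduces the three-index $\epsilon$--$\Phi$ contraction to a five-fold antisymmetrisation of a Kronecker delta against $\Phi$. Expanding $\delta_{p[a}\Phi_{bcde]}$ produces five terms; two of them contain the contractions $T_{dep}\delta_{pd}=T_{ded}$ and $T_{dep}\delta_{pe}=T_{dee}$, both of which vanish identically because $T$ is totally antisymmetric as a 3-form. The three surviving terms, after using the cyclic symmetry $T_{dea}=T_{ade}$ of a 3-form and relabelling the dummy pair $de\to pq$, assemble into
\begin{equation*}
\Phi_{ab}{}^{pq}T_{cpq}+\Phi_{bc}{}^{pq}T_{apq}+\Phi_{ca}{}^{pq}T_{bpq},
\end{equation*}
which by the definition (\ref{J3}) is exactly $2J_3(T)_{abc}$. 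Collecting the numerical prefactors produces $\star d\Phi=\frac{2}{5}J_3(T)$, and applying the explicit $J_3^{-1}$ from (\ref{J3-inv}) finishes the proof.

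The main obstacle is purely combinatorial: one must keep careful track of the normalisation of $d$ acting on a 4-form, the $\frac{1}{5!}$ in the Hodge star, the factor $30$ in (\ref{epsilon-phi-3}), and the $\frac{1}{5}$ from the five-fold antisymmetrisation, all of which conspire to produce the coefficient $\frac{2}{5}$. The only genuinely algebraic input beyond bookkeeping is the cancellation of the two diagonal $T$-contractions, which is what isolates the $J_3$-structure on the right-hand side.
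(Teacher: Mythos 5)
Your proposal is correct and follows essentially the same route as the paper: write $\star d\Phi$ as $\frac{1}{5!}\epsilon_{mnr}{}^{aijkl}\partial_a\Phi_{ijkl}$, substitute the torsion parametrisation (\ref{torsion-3-form}), reduce the $\epsilon$--$\Phi$ contraction via (\ref{epsilon-phi-3}) so that two of the five Kronecker terms die against the antisymmetry of $T$ and the remaining three assemble into $2J_3(T)$, giving $\star d\Phi=\frac{2}{5}J_3(T)$, and then invert with (\ref{J3-inv}). The bookkeeping you flag is exactly where the paper's factor of $\frac{2}{5}$ comes from, and your account of it is consistent with the paper's computation.
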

\begin{proof}
On one hand, we have
\be
\star (d\Phi)_{mnr} = \frac{1}{5!} \epsilon_{mnr}{}^{aijkl} 5 \partial_{[a} \Phi_{ijkl]} = \frac{1}{4!} \epsilon_{mnr}{}^{aijkl} \partial_{a} \Phi_{ijkl} 
\ee
On the other hand, substituting here the right-hand-side of (\ref{torsion-3-form}) we have
\be
 \frac{1}{4!} \epsilon_{mnr}{}^{aijkl} \partial_{a} \Phi_{ijkl} = \frac{1}{6} \epsilon_{mnr}{}^{aijkl} T_{aip} \Phi_{pjkl}.
 \ee
Now, using (\ref{epsilon-phi-3}) we get
\be
\epsilon_{mnr}{}^{aijkl} T_{aip} \Phi_{pjkl}=  \\ \nonumber
6 (\Phi_{mn}{}^{pq} T_{rpq}+ \Phi_{nr}{}^{pq} T_{mpq}+\Phi_{rm}{}^{pq} T_{npq}) =
12 J_3(T)_{mnr}.
\ee
This means we have
\be\label{torsion-dT}
\star (d\Phi)_{mnr} =  2 J_3(T)_{mnr}.
\ee
Now, the operator $J_3$ is invertible, with inverse given by (\ref{J3-inv}). 
This proves the proposition.
\end{proof}

\section{Riemann curvature identities}

Having described the intrinsic torsion and its relation with the covariant and exterior derivatives of the Cayley form, we can obtain very useful characterisations of (some parts of) the Riemann curvature. This material is standard, see for example Theorem 2.10 in \cite{Dwivedi}. The difference in our treatment is that we use the parametrisation of the torsion by a 3-form. 

\subsection{Irreducible components of the Riemann tensor}

This material is well-known, see e.g. \cite{Dwivedi}. 
The Riemann tensor is an object with values in ${\rm Sym}^2(\Lambda^2)$, with $\Lambda^4$ removed. Given that $\Lambda^2=\Lambda^2_{7}\oplus\Lambda^2_{21}$, it is easy to compute the decomposition of ${\rm Sym}^2(\Lambda^2)$ into irreducibles using the well-known facts about the tensor products of irreducible representations of ${\rm Spin}(7)$. We denote representations using the corresponding dimension written in bold face. We need the following tensor product decompositions:
\be
{\bf 7}\otimes_S {\bf 7} = {\bf 1} \oplus {\bf 27}, \\ \nonumber
{\bf 7}\otimes {\bf 21} = {\bf 105}\oplus {\bf 35}\oplus {\bf 7}, \\ \nonumber
{\bf 21}\otimes_S {\bf 21} = {\bf 1}\oplus {\bf 27} \oplus {\bf 35}\oplus {\bf 168}.
\ee
Taking into account that
\be
\Lambda^4 = {\bf 1}\oplus{\bf 7}\oplus {\bf 27} \oplus {\bf 35},
\ee
we see that Riemann curvature gets decomposed into the following irreducible components
\be
\text{Riemann} = {\bf 1}\oplus {\bf 27} \oplus {\bf 35} \oplus {\bf 105} \oplus {\bf 168}.
\ee
Of these the Ricci part is
\be
\text{Ricci}=  {\bf 1}\oplus {\bf 35}, 
\ee
and the Weyl part is
\be
\text{Weyl} = {\bf 27}  \oplus {\bf 105} \oplus {\bf 168}.
\ee
Our next task is to characterise which parts of the Riemann curvature can be extracted from the intrinsic torsion. 

\subsection{Part of Riemann curvature from the torsion}

We now take the commutator of two covariant derivatives applied to the basic 4-form to get
\be
4R_{ab[i}{}^p \Phi_{|p|jkl]}  = 2 \nabla_{[a}\nabla_{b]} \Phi_{ijkl} =
 8\nabla_{[a}( T_{b][i|p|} \Phi^{p}{}_{jkl]} ).
\ee
Applying the product rule and using (\ref{torsion-3-form}) one more time we get
\be\label{two-nabla}
4R_{ab[i}{}^p \Phi_{|p|jkl]}  = 4\nabla_{a}( T_{b[i|p|}) \Phi^{p}{}_{jkl]} - 4\nabla_{b}( T_{a[i|p|}) \Phi^{p}{}_{jkl]} \\ \nonumber
+ 4T_{a[i}{}^p T_{|bp|}{}^q \Phi_{jkl]q} - 4T_{b[i}{}^p T_{|ap|}{}^q \Phi_{jkl]q} .
\ee
This is what is known as the Bianchi identity in the literature, see e.g. \cite{Spiro-Spin7} Theorem 4.2. 

\subsection{Identity for the divergence of the torsion 3-form}

Before we proceed any further, a useful consequence of this identity is obtained by multiplying it with $\epsilon^{mn ab ijkl}$, and using (\ref{epsilon-phi-3}). On the left-hand side we get identically zero, by properties of the Riemann curvature. The right-hand side is non-trivial and so we get
\be
2\nabla_{a} T_{bi[m} \Phi_{n]}{}^{abi}  + \nabla^a T_{abi} \Phi^{bi }{}_{mn}    \\ \nonumber
+2 T_{ai}{}^p T_{bp[m} \Phi_{n]}{}^{abi} 
- 2T_{a}{}^{pq} T_{bpq} \Phi_{mn}{}^{ab}=0.
\ee
We would now like to extract from here the divergence $\nabla^a T_{amn}$ of the torsion 3-form in terms of other quantities. Applying to this expression $(1/4)(\mathbb{I}+J_2)$, we get
\be\label{divergence-torsion}
\nabla^a T_{amn} =  \frac{1}{2} \Phi_{[m}{}^{abc} \nabla_{n]} T_{abc}- \frac{1}{2} \Phi_{[m}{}^{abc} \nabla_{|a|} T_{n]bc}   \\ \nonumber
- \Phi_{[m}{}^{abc} T_{n]a}{}^p T_{bcp}.
\ee
We can rewrite this in a different form, by applying the projection to $\Lambda^2_7$. We get
\be\label{T-divergence}
\nabla^a T_{amn} - \frac{1}{2} \Phi_{mn}{}^{pq} \nabla^a T_{apq} =   \\ \nonumber
 \frac{1}{2} \Phi_{[m}{}^{abc} \nabla_{n]} T_{abc} - \frac{3}{2} \Phi_{[m}{}^{abc} \nabla_{|a|} T_{n]bc} - 2\Phi_{[m}{}^{abc} T_{n]a}{}^p T_{bcp}.
\ee
It is useful to rewrite this as the divergence of the original torsion. Using (\ref{torsion-3-form}) we have
\be
4 \nabla^a T_{a;mn} = \nabla^a T_{amn} - \frac{1}{2} \Phi_{mn}{}^{pq} \nabla^a T_{apq} - \Phi_{[m}{}^{abc} T_{n]a}{}^p T_{bcp},
\ee
and thus
\be\label{T-divergence-identity}
4 \nabla^a T_{a;mn}= \frac{1}{2} \Phi_{[m}{}^{abc} \nabla_{n]} T_{abc} - \frac{3}{2} \Phi_{[m}{}^{abc} \nabla_{|a|} T_{n]bc}   \\ \nonumber
- 3\Phi_{[m}{}^{abc} T_{n]a}{}^p T_{bcp}.
\ee

\subsection{Component of the Riemann curvature}

Thinking about $R_{abcd}$ as an object in $\Lambda^2\otimes_S \Lambda^2$ (with a copy of $\Lambda^4$ removed), and recalling the operator $K$ introduced in (\ref{operator-K}), we see that the object on the left-hand side of (\ref{two-nabla}) is valued in $\Lambda^2\otimes \Lambda^4_7$. We can then apply the inverse operator $K'$ to obtain
\be\label{bianchi}
 R_{abij} - \frac{1}{2} \Phi_{ij}{}^{pq} R_{abpq} = \nabla_a T_{bij} - \nabla_b T_{aij} \\ \nonumber
 - \frac{1}{2} \Phi_{ij}{}^{cd} \nabla_a T_{b cd}+ \frac{1}{2} \Phi_{ij}{}^{cd} \nabla_b T_{a cd}
+ T_{ai}{}^p T_{bjp} - T_{bi}{}^p T_{ajp}- \Phi_{ij}{}^{pq} T_{ap}{}^k T_{bqk}.
\ee
Both sides of this equality can be checked to be in $\Lambda^2_{7}$ with respect to indices $ij$, by applying the projector to $\Lambda^2_{21}$ and seeing that the result is identically zero. This computation makes it obvious that all apart from the ${\bf 168}$ part of the Weyl curvature are determined by the intrinsic torsion. Indeed, all parts but this one come from $\Lambda^2 \otimes \Lambda^2_7$, and this is precisely what the part of the Riemann curvature tensor that the intrinsic torsion determines. 

\subsection{Ricci curvature scalar}

Before we use the facts above to obtain a formula for the Ricci curvature, let us note that there are two different ways to extract the Ricci scalar from here. One is to contract the indices with $g^{ai} g^{bj}$. The other is to contract it with $-(1/6) \Phi^{abij}$. Both of these give
\be\label{ricci-scalar}
R = - \Phi^{abcd} \nabla_a T_{bcd} + T^{abc} T_{abc} + \Phi^{abcd} T_{ab}{}^p T_{cdp}.
\ee
Note that this only depends on the exterior derivative $dT$ of the torsion 3-form.

\subsection{Einstein-Hilbert action}

It will be useful for what follows to compute the Einstein-Hilbert action as a linear combination of torsion squared terms. Denoting the volume form $v_\Phi=v_g$, we have
\be\nonumber
S_{EH}[g] = \int R v_g d^8x = \int \left( - \Phi^{abcd} \nabla_a T_{bcd} + T^{abc} T_{abc} + \Phi^{abcd} T_{ab}{}^p T_{cdp}\right) v_g d^8x.
\ee
Integrating by parts in the first term we have
\be
S_{EH}[g] = \int \left( (\nabla_a \Phi^{abcd})  T_{bcd} + T^{abc} T_{abc} + \Phi^{abcd} T_{ab}{}^p T_{cdp}\right) v_g d^8x.
\ee
We can now use (\ref{torsion-3-form}) to get
\be\label{nabla-phi-contr}
\nabla_a \Phi^{abcd} =  -  3\Phi^{[bc}{}_{pq} T^{d]pq} = - 2J_3(T)^{bcd}.
\ee
This means that the first term is a multiple of $T J_3(T)$. Rewriting also the last term in the same way we have
\be\label{S-EH}
S_{EH}[g] = \int \left(  T^{abc} T_{abc}-\frac{4}{3} J_3(T)^{abc}  T_{abc}  \right) v_g d^8x.
\ee
We will need this result later.

\subsection{Extracting the Ricci curvature}

We can extract the Ricci tensor from (\ref{bianchi}) by multiplying with $\Phi_{c}{}^{bij}$, and applying the Bianchi identity $R_{a[bij]}=0$ to get
\be
- \frac{1}{2} \Phi_{ij}{}^{pq} R_{abpq} \Phi_c{}^{bij} = - 6 R_{ac}.
\ee
Doing the same operations with the right-hand side and we get
\be
R_{ab} = - \nabla^c T_{abc} - \frac{1}{2} \Phi_{b}{}^{ijk} \nabla_{a} T_{ijk}  + \frac{1}{2} \Phi_{b}{}^{ijk} \nabla_{i} T_{ajk} \\ \nonumber
+T_a{}^{pq} T_{bpq} + \Phi_{b}{}^{ijk} T_{ai}{}^p T_{jkp} .
\ee
This is not explicitly symmetric in $ab$, and must therefore become symmetric when $T_{ijk}$ is given by its expression (\ref{T-d-Phi}). And indeed, the anti-symmetric part of the right-hand side vanishes in view of (\ref{divergence-torsion}). Thus, the Ricci curvature is given by
\be\label{Ricci}
R_{ab} =  - \frac{1}{2} \Phi_{(a}{}^{ijk} \nabla_{b)} T_{ijk}  + \frac{1}{2} \Phi_{(a}{}^{ijk} \nabla_{|i|} T_{b)jk} \\ \nonumber
+T_a{}^{pq} T_{bpq} + \Phi_{(a}{}^{ijk} T_{b)i}{}^p T_{jkp} .
\ee
We have now proven the result known since \cite{Fernandez}: when $d\Phi=0$ the metric is Ricci-flat. Indeed, by (\ref{T-d-Phi}) $d\Phi=0$ implies $T=0$, which in turn gives $R_{ab}=0$ by (\ref{Ricci}). Note that, unlike the Ricci scalar (\ref{ricci-scalar}), the Ricci tensor depends on the full covariant derivative of the torsion 3-form. 

\section{The action}

This section is central to the whole paper. We consider a two-parameter family of action functionals of the Cayley form $\Phi$ and an auxiliary 3-form $C$. The Lagrangians we consider are first-order in derivatives, and the Euler-Lagrange equations for the auxiliary field $C$ are algebraic. There is a member in the family of actions for which the Euler-Lagrange equation for $C$ equates it with the intrinsic torsion $T$ of the ${\rm Spin}(7)$-structure. For any member of the family of action functionals, once the Euler-Lagrange equation for $C$ is solved and $C$ is obtained in terms for the derivative of $\Phi$, this value of $C$ can be substituted back into the Lagrangian, resulting in a second-order in derivatives Lagrangian that depends solely on $\Phi$. As we will see, the Lagrangians arising this way are basically a linear combination of the two invariants that can be constructed from the intrinsic torsion of $\Phi$. We will analyse the actions arising this way, and derive the arising Euler-Lagrange equations for $\Phi$. 

There are some limitations to our construction. First, our motivation is to mimic what happens in Plebanski formalism that gives an efficient description of 4D General Relativity \cite{Bhoja:2024xbe}. Plebanski action is similarly a functional that depends on an ${\rm SU}(2)$ structure (encoded into a triple of 2-forms), and an auxiliary (connection) field. The action is first-order in derivatives and quadratic in the auxiliary field. After the connection is solved for from its Euler-Lagrange equations and substituted back into action action, one obtains a second-order action, which is just one of the two possible invariants that can be built from the intrinsic torsion of the ${\rm SU}(2)$ structure. Our construction generalises all this to the case of ${\rm Spin}(7)$ structures. Motivated by this example of Plebanski formalism, we do not allow more complicated than quadratic dependence of the action on the auxiliary field $C$, as we would like to retain the possibility to solve for $C$ explicitly. It is clear that more involved first-order actions depending on $\Phi, C$ can be constructed, with more complicated dependence on $C$ than quadratic, but this is not pursued in the present paper.

\subsection{A two-parameter family of action functionals}

The action we want to construct is a functional of $\Phi\in \Lambda^4$ and $C\in \Lambda^3$. It will contain a term imposing the constraints that guarantee that $\Phi$ is of algebraic type of the Cayley form. We will never need to specify what these constraints are, as we will only need their consequences. Given $\Phi,C$ there is a natural top form that can be constructed, which is $\Phi\wedge dC$. We take the integral of this to be our 'kinetic', i.e. containing derivatives term. Lagrangians of this type are well-known in the context of topological field theories. Thus, the theory with the action $\int \Phi \wedge dC$, with no additional terms, is a topological field theory known as (Abelian) BF theory. Our Lagrangian, however, contains other terms which render the theory non-topological. Apart from the terms imposing the constraints on $\Phi$, we also want the Lagrangian to contain terms quadratic in $C$, such that the variation of the action with respect to $C$ gives a set of linear equations for $C$. 

The representation theoretic fact $\Lambda^3=\Lambda^3_8\oplus\Lambda^3_{48}$ implies that there are two linearly independent quadratic invariants that can be constructed from a 3-form $C$. A computation gives
\be\nonumber
|C_8|^2=\pi_8(C)_{abc} C^{abc} = \frac{1}{7} (C_{abc})^2 - \frac{3}{14} \Phi^{abcd} C_{ab}{}^p C_{cdp} = \frac{1}{7}(C^2 - C J_3(C)), \\ \nonumber
|C_{48}|^2=\pi_{48}(C)_{abc} C^{abc} = \frac{6}{7} (C_{abc})^2 + \frac{3}{14} \Phi^{abcd} C_{ab}{}^p C_{cdp} =\frac{1}{7}(6 C^2 + C J_3(C)).
\ee
This shows that the two linearly independent quadratic invariants constructed from $C$ can be taken to be $(C_{abc})^2$ and $\Phi^{abcd} C_{ab}{}^p C_{cdp}$. The coefficient in front of one of these can always be chosen as desired by rescaling the $C$ field. This leads us to consider the following family of action functionals
\be\label{action-section-kappa}
S[\Phi,C] = \int \Phi \wedge (dC -6  C\wedge_\Phi C) + \frac{\kappa}{6} (C)^2 v_\Phi + \frac{\lambda}{6} v_\Phi + \text{constr.}
\ee
The choice of coefficients here will be convenient for what follows. The constant $\lambda$ is a 'cosmological constant' term that can be set to zero if desired. The object $C\wedge_\Phi C$ is the 4-form
\be
C\wedge_\Phi C := \frac{1}{4!} (C\wedge_\Phi C)_{ijkl}  \,dx^i \wedge dx^j \wedge dx^k\wedge dx^l, \\ \nonumber  (C\wedge_\Phi C)_{ijkl}= g^{pq} C_{[ij|p|} C_{kl]q}
\ee
and $v_\Phi \equiv v_g = (1/14) \Phi \wedge \Phi$ is the volume form. Written in index notation the action becomes
\be
S[\Phi,C] =\frac{1}{3!}  \int \Big( \frac{1}{4!} \tilde{\epsilon}^{ijklabcd}  \Phi_{ijkl} (\partial_a C_{bcd} - \frac{3}{2} g^{pq} C_{abp} C_{cdq} ) \\ \nonumber
+ \kappa g^{ai} g^{bj} g^{ck} C_{abc} C_{ijk} v_g + \lambda v_g \Big) d^8x.
\ee
The constraint terms are omitted for brevity. The object $\tilde{\epsilon}^{ijklabcd}$ is the density weight one totally anti-symmetric tensor. This exists on any orientable manifold, and does not need a metric for its definition. We emphasise that $\tilde{\epsilon}^{ijklabcd}$ is independent of any metric, in particular the metric defined by $\Phi$, to make it clear that this tensor is not subject to variation when Euler-Lagrange equations are derived below. Using the self-duality (\ref{self-duality}) of $\Phi$ we can see that the two scalars added to the Lagrangian are indeed $(C_{abc})^2$ and $\Phi^{abcd} C_{ab}{}^p C_{cdp}$.

\subsection{The variation with respect to the 3-form}

The variation of the action with respect to $C$ is given by
\be
\delta_C S= \frac{1}{3!} \int v_g \Big(-   \frac{1}{4!} \epsilon^{ijklabcd}  \partial_{a} \Phi_{ijkl} -3 \Phi^{aecd}  C_{ae}{}^b \\ \nonumber
+ 2\kappa C^{bcd} \Big) \delta C_{bcd} d^8x,
\ee
where $v_g$ is the volume form for $g$, and we used the self-duality of the basic 4-form in the second term. The resulting Euler-Lagrange equation is therefore
\be
  \frac{1}{4!} \epsilon_{bcd}{}^{aijkl}  \partial_{a} \Phi_{ijkl} - 2 J_3(C)_{bcd}+ 2\kappa C_{bcd} =0.
\ee
When $\kappa=0$, comparing to (\ref{torsion-dT}), we see that $C=T$. The coefficient in front of the second term in the action was selected so that this happens. In general we have
\be
J_3(T) =  J_3(C) - \kappa C.
\ee
For a general $\kappa$ this relation can be inverted
\be\label{C-T}
C= \frac{6 T +\kappa J_3(T)}{6 - (5+\kappa)\kappa},
\ee
which shows that $\kappa=1, -6$ are the values when the relation cannot be inverted. Note that these are also the eigenvalues of $J_3$. We are particularly interested in the case when $\kappa=0$, where $C=T$, and $\kappa=-2$ where 
\be\label{C-GR}
C= \frac{1}{2} T - \frac{1}{6} J_3(T).
\ee

\subsection{Variation of the metric with respect to the 4-form}

To vary the action with respect to the 4-form, we need a formula for the variation of $g^{ij}$ with respect to the 4-form $\Phi_{ijkl}$. This is standard, see e.g. \cite{Spiro-Spin7}. We provide the full derivation in our notations for convenience. The best way to obtain a relation between the variations is to consider a variation of the metric, thought of as an ${\rm GL}(8,\R)$ transformation. As we have already discussed in (\ref{h-delta-phi}), such a transformation effected by a symmetric $8\times 8$ matrix $h_{ij}$ induces a change in the basic 4-form given by 
\be
K(h)_{ijkl} = 4 h_{[i|p|} \Phi^p{}_{jkl]}.
\ee
It will be more convenient, however, to consider the variation of $\Phi^{ijkl}$. We have
\be\label{delta-phi}
\delta\Phi_{ijkl} = 4\alpha \delta g_{[i|p|} \Phi^p{}_{jkl]}.
\ee
The coefficient of proportionality $\alpha$ should be fixable by taking the variation of any of the algebraic relations satisfied by $\Phi$. For example we have
\be
\Phi_{abcd}\Phi_{ijkl} g^{ia} g^{jb} g^{kc} g^{ld} = 336.
\ee
Varying this gives
\be
2 \delta\Phi_{ijkl} \Phi^{ijkl} + 4\cdot 42 \delta g^{ia} g_{ia} =0,
\ee
where we used (\ref{phi-triple-contr}). Using (\ref{delta-phi}) we have
\be
4 \alpha  \delta g^{[i|p|} \Phi_p{}^{jkl]} \Phi_{ijkl} + 2\cdot 42 \delta g_{ia} g^{ia} =0.
\ee
Using (\ref{phi-triple-contr}) again this becomes
\be
2\alpha \delta g_{ij} g^{ij} +  \delta g^{ij} g_{ij}=0,
\ee
which shows that $\alpha=1/2$. Thus, we have
\be\label{delta-phi*}
\delta\Phi_{ijkl} = 2 \delta g_{[i|p|} \Phi^p{}_{jkl]}.
\ee
As a check of consistency of these expressions, we also compute
\be
\delta ( \Phi^{ijkl}) = \delta ( g^{ia} g^{jb} g^{kc} g^{ld} \Phi_{abcd}) =  \\ \nonumber
g^{ia} g^{jb} g^{kc} g^{ld} \delta\Phi_{abcd} + 4 \delta^{[i|a} g^{jb} g^{kc} g^{l]d}  \Phi_{abcd}
= \\ \nonumber
2 \delta g^{[i|p|} \Phi_p{}^{jkl]} - 4  \delta g^{[i|p|} \Phi_p{}^{jkl]}=-2 \delta g^{[i|p|} \Phi_p{}^{jkl]} = - g^{ia} g^{jb} g^{kc} g^{ld} \delta\Phi_{abcd} .
\ee
This is analogous to the relation that we have for the metric 
\be
\delta g^{ij} = - g^{ia} g^{jb} \delta g_{ab}.
\ee

We now extract $\delta g_{ij}$ in terms of $\delta \Phi_{ijkl}$. To do so we multiply the above expression by $\Phi^a{}_{jkl}$. We get
\be
\delta\Phi_{(i|jkl|} \Phi_{a)}{}^{jkl} = 12 \delta g_{ia} + 9 \delta g_{pq} g^{pq} g_{ia}.
\ee
One more contraction gives 
\be
\delta g_{pq} g^{pq} = \frac{1}{84} \delta\Phi_{ijkl} \Phi^{ijkl},
\ee
and so 
\be
\delta g_{ij} = \frac{1}{12} ( \delta\Phi_{(i|pqr|} \Phi_{j)}{}^{pqr} - \frac{3}{28} g_{ij} \delta\Phi_{pqrs} \Phi^{pqrs}).
\ee
Because the variation of the 4-form with all upper indices is given by minus the variation of the form with the lower indices, and the same is true for the metric variation, we can also write
\be\label{delta-g}
\delta g^{ij} = \frac{1}{12} ( \delta\Phi^{(i|pqr|} \Phi^{j)}{}_{pqr} - \frac{3}{28} g^{ij} \delta\Phi^{pqrs} \Phi_{pqrs}),
\ee
which is the form of the relation that will be used later. 

\subsection{Variation of the action with respect to the 4-form}

We now derive the other half of the Euler-Lagrange equations. We first rewrite the action in terms of $\Phi^{abcd}$
\be\label{action-for-variation}
S[\Phi,C] =\frac{1}{3!}  \int \Big( \Phi^{abcd}   (\partial_a C_{bcd} - \frac{3}{2} g^{pq} C_{abp} C_{cdq} ) \\ \nonumber
+ \kappa g^{ap} g^{bq} g^{cr} C_{abc} C_{pqr}  + \lambda  \Big) v_g d^8x.
\ee
and then vary with respect to $\Phi^{abcd}$. We have\footnote{The first version of this paper, as well as the journal published version, contained a variation mistake in that the dependence of the $\kappa$-term on the inverse metric was not properly taken into account. The author is grateful to Sam Close for spotting this mistake.}
\be
\delta_\Phi S[\Phi,T] =\frac{1}{3!}  \int  \Big[ \delta \Phi^{abcd} (\partial_aC_{bcd} - \frac{3}{2} g^{pq} C_{abp} C_{cdq})  \\ \nonumber
-  \Phi^{abcd} \frac{3}{2} \delta g^{pq} C_{abp} C_{cdq} + 3\kappa g^{ac} g^{bd} \delta g^{pq} C_{pab} C_{qcd} \\ \nonumber
- \frac{1}{2} \delta g^{pq} g_{pq} \Big( \Phi^{abcd} (\partial_aC_{bcd} - \frac{3}{2} g^{pq} C_{abp} C_{cdq})  
+\kappa (C_{abc})^2 + \lambda \Big) \Big] v_g d^8 x.
\ee
The terms containing $\delta g^{pq} g_{pq}$ are from the variation of the volume form. We now substitute (\ref{delta-g}). The first term in the second line becomes
\be
\left( - \frac{1}{8}\Phi^{ijkl} C_{ija} C_{kle}\Phi^{e}{}_{bcd}+ \frac{3}{8\cdot 28} \Phi^{ijkl} C_{ij}{}^p C_{klp} \Phi_{abcd} \right) \delta\Phi^{abcd} ,
\ee
while the second term is
\be
\frac{\kappa}{4}\left( C_{apq} C_e{}^{pq} \Phi^e{}_{bcd} - \frac{3}{28} (C_{ijk})^2 \Phi_{abcd}\right) \delta \Phi^{abcd}.
\ee
Thus, the variation of the action with respect to $\Phi^{abcd}$ is
\be\nonumber
E_{abcd}=\partial_{[a}C_{bcd]} - \frac{3}{2}  C_{[ab}{}^{p} C_{cd]p}  - \frac{1}{8}\Phi^{ijkl} C_{ij[a} C_{|kle|}\Phi^{e}{}_{bcd]}
+ \frac{3}{8\cdot 28} \Phi^{ijkl} C_{ij}{}^p C_{klp} \Phi_{abcd}\\ \nonumber 
+\frac{\kappa}{4}\left( C_{[a|pq} C_{e|}{}^{pq} \Phi^{e}{}_{bcd]} - \frac{3}{28} (C_{ijk})^2 \Phi_{abcd}\right) \\ \nonumber 
- \frac{1}{2\cdot 84} \Phi_{abcd} \left( \Phi^{ijkl} (\partial_i C_{jkl} -\frac{3}{2} C_{ij}{}^p C_{klp}) + \kappa (C_{ijk})^2 + \lambda \right).
\ee
This does not need to be zero, as the action also contains terms imposing the constraints guaranteeing that $\Phi_{abcd}$ is of the correct algebraic type. The constraint terms produce a variation that is an arbitrary tensor in $\Lambda^4_{27}$. So, we can only deduce that the $\Lambda^4_{35+1}$ and $\Lambda^4_{7}$ projection of the above vanishes. 
Before we extract these projections, it is worth evaluating the trace of the field equations. We have
\be
\Phi^{abcd} E_{abcd} = -2 \lambda - \frac{1}{2} \kappa (C_{abc})^2 - \Phi^{abcd} ( \partial_a C_{bcd}  - \frac{3}{4} C_{ab}{}^p C_{cdp}).
\ee
This is the projection of the field equations onto $\Lambda^4_1$, which must vanish. We therefore get the following consequence of the field equations
\be\label{feqs-trace}
\Phi^{abcd} \partial_a C_{bcd}  = \frac{3}{4} C_{ab}{}^p C_{cdp}-2\lambda- \frac{1}{2}\kappa (C_{abc})^2.
\ee
We can use this to simplify $E_{abcd}$. We have
\be\nonumber
E'_{abcd}=\partial_{[a}C_{bcd]} - \frac{3}{2}  C_{[ab}{}^{p} C_{cd]p}  - \frac{1}{8}\Phi^{ijkl} C_{ij[a} C_{|kle|}\Phi^{e}{}_{bcd]} +\frac{\kappa}{4} C_{[a|pq} C_{e|}{}^{pq} \Phi^{e}{}_{bcd]} \\ \label{E-prime}
+ \frac{1}{56} \Phi_{abcd} \Phi^{ijkl} C_{ij}{}^p C_{klp} + \frac{1 }{168} \Phi_{abcd}(\lambda-5 \kappa(C_{ijk})^2).
\ee
The $\Lambda^4_{35+1+7}$ projections of this vanish when the $\Lambda^4_{35+1+7}$ projections of $E_{abcd}$ vanish and vice versa, so $E'_{abcd}=0$ gives an equivalent encoding of field equations. 

\subsection{Extracting $\Lambda^4_{35+1+7}$ projections}

To understand the implications of the field equations we extract the $\Lambda^4_{35+1}$ and $\Lambda^4_{7}$ projections. This gives
\be\label{feqs}
\Phi_{b}{}^{pqr} E'_{apqr}  =\frac{1}{4} \Phi_{b}{}^{pqr} \nabla_{a}C_{pqr} - \frac{3}{4}  \Phi_{b}{}^{pqr}\nabla_r C_{apq}\\ \nonumber
- \frac{3}{2}\Phi_{b}{}^{pqr} C_{ap}{}^s C_{qrs} 
- \frac{3}{4}\Phi^{pqrs} C_{apq} C_{brs} + \frac{3\kappa}{2} C_a{}^{pq} C_{bpq}\\ \nonumber
+ \frac{1}{4} g_{ab}\left(\lambda -\frac{\kappa}{2} (C_{pqr})^2 + \frac{3}{4} \Phi^{pqrs} C_{pq}{}^p C_{rsp}\right).
\ee
Its $ab$ symmetrisation and anti-symmetrisation compute the $\Lambda^4_{35+1}$ and $\Lambda^4_{7}$ parts respectively. We wrote the derivatives here as the covariant derivatives, for the computations to follow. 

\subsection{Rewriting the $\kappa=0$ field equations - antisymmetric part}

For $\kappa=0$ we have $C=T$. Let us understand the arising field equations. We start with the anti-symmetric part. Taking (twice) the anti-symmetric part of the field equations (\ref{feqs}) we get
\be
\frac{1}{2} \Phi_{[a}{}^{pqr} \nabla_{b]}T_{pqr} - \frac{3}{2}  \Phi_{[a}{}^{pqr}\nabla_{|r|} T_{b]pq}- 3\Phi_{[a}{}^{pqr} T_{b]p}{}^s T_{qrs} =0.
\ee
With the help of the curvature identity (\ref{T-divergence-identity}) we can rewrite this as
\be
\nabla^r T_{r;ab} =0,
\ee
which is just vanishing of the divergence of the original torsion. This also makes it manifest that this equation is $\Lambda^2_7$ valued. Note also that this equation does not hold automatically. It is a non-trivial field equation to be imposed, and it becomes a second order PDE on the original 4-form. It can be interpreted as the evolution equation for the  $\Lambda^4_7$ part of the Cayley form perturbation, as is confirmed by the linearised analysis below. 

\subsection{Rewriting the $\kappa=0$ field equations - symmetric part}

For the analysis of the symmetric part, we take (twice) the symmetric part of (\ref{feqs}), also writing it with the opposite sign
\be\label{feq-symm}
-\frac{1}{2} \Phi_{(a}{}^{pqr} \nabla_{b)}T_{pqr} + \frac{3}{2}  \Phi_{(a}{}^{pqr}\nabla_{|r|} T_{b)pq}\\ \nonumber
+ 3 \Phi_{(a}{}^{pqr} T_{b)p}{}^s T_{qrs} 
+\frac{3}{2}\Phi^{pqrs} T_{apq} T_{brs} 
+ \frac{3}{8} g_{ab} \Phi^{ijkl} T_{ij}{}^p T_{klp} +\lambda g_{ab}=0.
\ee
Contract the resulting equation with $g^{ab}$ we get (\ref{feqs-trace}). 
Comparing this with (\ref{ricci-scalar}) we see that this is {\it not} the condition that the Ricci scalar is constant. Rather, using (\ref{ricci-scalar}), we can rewrite this equation as
\be
R = T_{abc} T^{abc} + \frac{1}{4} \Phi^{abcd} T_{ab}{}^p T_{cdp} + 4\lambda.
\ee
A computation shows that this can be rewritten as
\be
R= T^{abc} ( T+ \frac{1}{6} J_3(T))_{abc} = \frac{7}{6} (T_{abc}^{48})^2 + 4\lambda.
\ee
Here $T_{48} = \pi_{48}(T)$ is the $\Lambda^3_{48}$ part of the torsion 3-form. We thus see that the curvature scalar is sourced just by this part of the torsion. 

For the complete symmetric part of the equation, comparing this with (\ref{Ricci}), we can see that the second order part here does not reduce to that in $R_{ab}$. The comparison with (\ref{Ricci}) suggests that we can rewrite (\ref{feq-symm}) as
\be
3 R_{ab} + \Phi_{(a}{}^{pqr} \nabla_{b)}T_{pqr} \\ \nonumber
- 3 T_a{}^{pq} T_{bpq} +\frac{3}{2}\Phi^{pqrs} T_{apq} T_{brs}
+ \frac{3}{8} g_{ab} \Phi^{ijkl} T_{ij}{}^p T_{klp} + \lambda g_{ab}=0.
\ee
We thus see that the field equations {\it do not} state that the metric is Einstein. Instead, there are extra contributions coming from the torsion 3-form, and its derivatives. Note that the covariant derivative appears in this equation in such a way that, while both $R_{ab}$ and $\Phi_{(a}{}^{pqr} \nabla_{b)}T_{pqr}$ do depend on it, the specific combination of these terms that appears does not depend on $\nabla$. This will become more pronounced once we rewrite the field equations as a condition that a certain 4-form vanishes.

\subsection{Different ways of writing the field equations}

We note that we can introduce a symmetric tensor
\be
H_{ab}:= \Phi^{ijkl} T_{ija} T_{klb} - \frac{1}{7} g_{ab} \Phi^{ijkl} T_{ij}{}^p T_{klp}.
\ee
The 4-form encoding the field equations can then be written very compactly as
\be
E'_{abcd}=\partial_{[a}T_{bcd]} - \frac{3}{2}  T_{[ab}{}^{p} T_{cd]p}  - \frac{1}{8} H_{[a|e|} \Phi^e{}_{bcd]} + \frac{\lambda}{84} \Phi_{abcd}.
\ee
Recall that $E'_{abcd}$ is the tensor encoding the field equations of the theory, see (\ref{E-prime}). 
The field equations are then the statement that this equals to an arbitrary tensor in $\Lambda^4_{27}$, which we know can be parametrised as (\ref{Psi-Phi}). So, we get one of the possible ways of writing the field equations
\be
\partial_{[a}T_{bcd]} - \frac{3}{2}  T_{[ab}{}^{p} T_{cd]p}  - \frac{1}{8} H_{[a|e|} \Phi^e{}_{bcd]} + \frac{\lambda}{84} \Phi_{abcd} = \Psi_{[ab}{}^{pq} \Phi_{|pq| cd]}, 
\ee
where $\Psi^{abcd}$ is an arbitrary symmetric tracefree matrix in ${\rm Sym}_0^2(\Lambda^2_7)$.

\subsection{Yet another rewriting of the field equations}

Yet another way of writing the field equations, potentially useful, is obtained by 
computing $\Phi_{abc}{}^s \Phi_s{}^{pqr} E'_{dpqr}$, and anti-symmetrising on $abcd$. This gives a 4-form that is projected onto the $\Lambda^4_{35+1}$ and $\Lambda^4_7$ parts, eliminating the $\Lambda^4_{27}$ part of $E'_{abcd}$ that does not need to be zero. For a general 4-form we have
\be
\frac{1}{6}\Phi_{abc}{}^s \Phi_s{}^{pqr} \sigma_{dpqr} = (\mathbb{I}- \frac{1}{2} J_4)(\sigma)_{abcd},
\ee
explicitly showing that the $\Lambda^4_{27}$ component is projected away. We now apply this projector to the 4-form $E'_{abcd}$ to get the following 4-form field equations
\be\label{feqs-4-form}
\nabla_{[a} T_{bcd]} - \frac{3}{4} \Phi_{[ab}{}^{pq} \nabla_c T_{d]pq} - \frac{3}{4} \Phi_{[ab}{}^{pq} \nabla_{|p|} T_{cd]q} \\ \nonumber
-\frac{3}{2} T_{[ab}{}^p T_{cd]p} + \frac{3}{4} \Phi_{[ab}{}^{pq} T_{cd]}{}^{r} T_{pqr}- \frac{1}{8} \Phi_{[abc}{}^p \Phi^{ijkl} T_{d]ij} T_{klp} 
- \frac{3}{2} \Phi_{[ab}{}^{pq} T_{c|p|}{}^r T_{d]qr} \\ \nonumber
+ \frac{1}{32} \Phi_{abcd} \Phi^{ijkl}  T_{ij}{}^p T_{klp} + \frac{\lambda}{12} \Phi_{abcd}=0.
\ee
Since the first line here can be rewritten as
\be
(\mathbb{I} - \frac{1}{2} J_4)( \nabla_{[a} T_{bcd]}),
\ee
we see that the operator that appears in the field equations is built from the usual partial derivative, rather than the covariant one. 

\subsection{Analysis of the $\kappa=-2$ field equations}

In the general $\kappa$ case, we can rewrite the field equations (\ref{feqs}) in terms of the intrinsic torsion 3-form $T$, using the relation between $C$ and $T$. However, the arising general $\kappa$ results are too cumbersome. We now specialise to the particularly interesting case $\kappa=-2$, when the full non-linear equations of the theory reduce to Einstein equations, as we shall now see. 

We substitute $C$ in the form (\ref{C-GR}) to (\ref{feqs}) and whenever the derivatives get applied to the basic 4-form, evaluate them using (\ref{torsion-3-form}). The resulting field equations are as follows
\be\nonumber
\frac{1}{4} \Phi_{b}{}^{pqr} \nabla_{a}T_{pqr} - \frac{1}{2}  \Phi_{b}{}^{pqr}\nabla_r T_{apq}+  \frac{1}{4} \Phi_{a}{}^{pqr}\nabla_r T_{bpq}+ \frac{1}{2}( \nabla^p T_{abp} - \frac{1}{2} \Phi_{ab}{}^{cd} \nabla^p T_{cdp}) \\ \nonumber
 - \frac{1}{2} T_a{}^{pq} T_{bpq} - \frac{1}{4}\Phi_{(a}{}^{pqr} T_{b)p}{}^s T_{qrs} 
+\frac{1}{16}\Phi^{pqrs} T_{apq} T_{brs}- \frac{1}{16} \Phi^{pqrs} T_{abp} T_{qrs} + \frac{5}{8}\Phi_{[a}{}^{pqr} T_{b]p}{}^s T_{qrs}  \\ \nonumber
+ \frac{1}{32} \Phi_{ab}{}^{pq} \Phi^{ijkl} T_{pqi} T_{jkl} + \frac{1}{32} \Phi_{a}{}^{pqr} \Phi_b^{ijk} T_{pqi} T_{rjk} - \frac{1}{96} \Phi_{a}{}^{pqr} \Phi_b^{ijk} T_{pqr} T_{ijk}
\\ \nonumber
+ \frac{1}{4} g_{ab}\left(\lambda -\frac{1}{2} (T_{pqr})^2 - \frac{3}{4} \Phi^{pqrs} T_{pq}{}^p T_{rsp}+ \frac{1}{2} \Phi^{pqrs} \nabla_p T_{qrs} \right)=0.
\ee
To simplify this we first use (\ref{T-divergence}) in the last term in the first line, which gives
\be\nonumber
\frac{1}{4} \Phi_{(a}{}^{pqr} \nabla_{b)}T_{pqr} -  \frac{1}{4} \Phi_{(a}{}^{pqr}\nabla_{|r|} T_{b)pq} \\ \nonumber
 - \frac{1}{2} T_a{}^{pq} T_{bpq} - \frac{1}{4}\Phi_{(a}{}^{pqr} T_{b)p}{}^s T_{qrs} 
+\frac{1}{16}\Phi^{pqrs} T_{apq} T_{brs}\\ \nonumber
+ \frac{1}{32} \Phi_{a}{}^{pqr} \Phi_b^{ijk} T_{pqi} T_{rjk} - \frac{1}{96} \Phi_{a}{}^{pqr} \Phi_b^{ijk} T_{pqr} T_{ijk}\\ \nonumber
+ \frac{1}{4} g_{ab}\left(\lambda -\frac{1}{2} (T_{pqr})^2 - \frac{3}{4} \Phi^{pqrs} T_{pq}{}^p T_{rsp}+ \frac{1}{2} \Phi^{pqrs} \nabla_p T_{qrs} \right)=0
\ee
for the $ab$-symmetric part and 
\be\nonumber
 \frac{1}{32}\left( \Phi_{ab}{}^{pq} \Phi^{ijkl} T_{pqi} T_{jkl} - 2 \Phi^{pqrs} T_{abp} T_{qrs} - 12\Phi_{[a}{}^{pqr} T_{b]p}{}^s T_{qrs} \right) 
=0.
\ee
for the $ab$-anti-symmetric part. To simplify this quantity, we contract (\ref{identity}) with $T_{cdi} T^{jkl}$. This immediately shows that the quantity in brackets here vanishes. So, the anti-symmetric part of the field equations is satisfied identically. 

To simplify the symmetric part, we contract (\ref{identity}) with $T_{bcd} T^{jkl}$ to get the following identity
\be
\frac{1}{4} \Phi_{a}{}^{pqr} \Phi_b^{ijk} T_{pqi} T_{rjk}- \frac{1}{12} \Phi_{a}{}^{pqr} \Phi_b^{ijk} T_{pqr} T_{ijk}= \\ \nonumber
-2\Phi_{(a}{}^{pqr} T_{b)p}{}^s T_{qrs} + \frac{1}{2} g_{ab} \Phi^{ijkl} T_{ij}{}^p T_{klp} - \frac{1}{2} \Phi^{pqrs} T_{apq} T_{brs} .
\ee
Applying this to the third line of the symmetric part we get
\be\nonumber
\frac{1}{4} \Phi_{(a}{}^{pqr} \nabla_{b)}T_{pqr} -  \frac{1}{4} \Phi_{(a}{}^{pqr}\nabla_{|r|} T_{b)pq} 
 - \frac{1}{2} T_a{}^{pq} T_{bpq} - \frac{1}{2}\Phi_{(a}{}^{pqr} T_{b)p}{}^s T_{qrs} 
\\ \nonumber
+ \frac{1}{4} g_{ab}\left(\lambda -\frac{1}{2} (T_{pqr})^2 - \frac{1}{2} \Phi^{pqrs} T_{pq}{}^p T_{rsp}+ \frac{1}{2} \Phi^{pqrs} \nabla_p T_{qrs} \right)=0.
\ee
Using (\ref{Ricci}), (\ref{ricci-scalar}), and multiplying by $-2$, this can be rewritten in terms of the Ricci tensor as
\be
 R_{ab} + \frac{1}{4} g_{ab} (R- 2\lambda)=0 \qquad \Rightarrow \qquad \lambda = \frac{3R}{4}. 
\ee
Thus, the symmetric part of the field equations coincides with Einstein equations. Thus, the $\Lambda^3_{1+35}$ part of $\kappa=-2$ field equations are Einstein equations, and the $\Lambda^4_7$ part is satisfied identically. 

\subsection{Action rewritten in terms of torsion squared}

The obtained result that $\kappa=-2$ action leads to Einstein equations as its Euler-Lagrange equations is not surprising, given that the action in this case can be rewritten in terms of the Ricci scalar. Indeed, we start with the action (\ref{action-for-variation}), replace the derivative in the first term with the Levi-Civita covariant derivative, and integrate by parts to put the derivative on the Cayley form. We get
\be
S_\kappa[\Phi,C] =\frac{1}{3!}  \int \Big( -(\nabla_a \Phi^{abcd})   C_{bcd} - \frac{3}{2} \Phi^{abcd} C_{ab}{}^{p} C_{cdp}  \\ \nonumber
+ \kappa  C^{abc} C_{abc}  + \lambda  \Big) v_g d^8x.
\ee
Using (\ref{nabla-phi-contr}) we can write the action as
\be\nonumber
S_\kappa[\Phi,C] =\frac{1}{3!}  \int \Big( 2J_3(T)^{abc}   C_{abc} - J_3(C)^{abc} C_{abc} 
+ \kappa  C^{abc} C_{abc}  + \lambda  \Big) v_g d^8x.
\ee
Substituting (\ref{C-T}) we get for the torsion squared part
\be
\frac{1}{(6 - (5+\kappa)\kappa)^2} \Big( 2 (6 - (5+\kappa)\kappa) J_3(T)   (6T+\kappa J_3(T))  \\ \nonumber
- J_3(6T+\kappa J_3(T)) (6T+\kappa J_3(T))
+ \kappa  (6T+\kappa J_3(T)) (6T+\kappa J_3(T))\Big) \\ \nonumber
=\frac{1}{(6 - (5+\kappa)\kappa)^2} \Big(  (6\kappa T+(6 - 10\kappa -\kappa^2) J_3(T) - \kappa J^2_3(T)) (6T+\kappa J_3(T)) \Big).
\ee
Using $(J_3)^2 = 6 \mathbb{I} - 5 J_3$ and simplifying we get
\be
\frac{J_3(T)  (6T+\kappa J_3(T))}{6 - (5+\kappa)\kappa}    =\frac{(6-5\kappa) T J_3(T)  +6\kappa T^2}{6 - (5+\kappa)\kappa}   ,
\ee
and thus
\be\label{action-kappa-phi}
S_\kappa[\Phi] = \frac{1}{6}\int_M \left( \frac{(6-5\kappa) T J_3(T)  +6\kappa T^2}{6 - (5+\kappa)\kappa} + \lambda \right) v_g d^8 x. 
\ee
Thus, the general action is indeed the integral of a linear combination of $T^2$ and $T J_3(T)$. 

We now specialise to $\kappa=-2$ and get
\be\nonumber
S_{\kappa=-2}[\Phi] = \frac{1}{6} \int \Big(\frac{4}{3} T J_3(T)- T^2      + \lambda \Big) v_g d^8x \\ 
= - \frac{1}{6} \int \Big( R -  \lambda \Big)v_g d^8x,
\ee
where we used (\ref{S-EH}). Thus, the $\kappa=-2$ action, after the solution for $C$ is substituted back into the Lagrangian, is a multiple of the Einstein-Hilbert functional. This explains why the field equations in this case reproduce Einstein equations. Indeed, varying with respect to the metric we get
\be
R_{ab} - \frac{1}{2} g_{ab} (R -  \lambda )=0.
\ee
Taking the trace of this equation one gets $\lambda = 3R/4$, and thus this is the same equation

\section{Linearisation}
\label{sec:lin-action}

We now compute the linearisation of the general action (\ref{action-kappa-phi}) and verify that it gives the most general diffeomorphism-invariant linearised theory (\ref{L-diff}). 

We denote the linearisation of torsion $\delta T=t$. We have already computed the full non-linear action in the torsion squared form in (\ref{action-kappa-phi}). For our comparison with flat space linearised action we set $\lambda=0$. It is most useful to rewrite the linearisation of (\ref{action-kappa-phi}) in terms of $J_3(t)$ rather than $t$ itself, because there is a simpler explicit formula for $J_3(t)$, see below. This linearised action takes the form 
\[
S^{(2)}[\phi] =  \int {\mathcal L}^{(2)} =  
\frac{1}{6} \int \frac{\kappa J_3(t) J_3(t) +  J_3(t) (J_3+ 5\mathbb{I}) J_3(t)}{6 - (5+\kappa)\kappa} .
\]

We now use the parametrisation (\ref{phi-param}). In this parametrisation, using (\ref{epsilon-phi-3}) gives
\[
J_3(t)_{abc}= \frac{1}{48} \epsilon_{abc}{}^{pijkl} \partial_p \phi_{ijkl} = \\ \nonumber
-\frac{1}{2} \Phi_{abc}{}^p \partial^i ( h_{ip} - \frac{1}{4} \xi_{ip}) + \frac{1}{2}\Phi_{abc}{}^i \partial_i h - \frac{3}{2} \Phi_{[bc}{}^{ip} \partial_p ( h_{a]i} - \frac{1}{4} \xi_{a]i}). 
\]
An algebraic manipulation computation then gives
\be
\left(1- \frac{5\kappa}{6} - \frac{\kappa^2}{6}\right) {\mathcal L}^{(2)}=  \frac{1}{2} (1+\frac{\kappa}{6}) (\partial_a h_{bc})^2 -\frac{1}{6}(1-\frac{\kappa}{2}) (\partial_ a h)^2  \\ \nonumber 
 -\frac{1}{3} (1-\frac{\kappa}{2}) h \partial^a \partial^b h_{ab}- \frac{2}{3} (\partial^a h_{ab})^2 
+\frac{1}{24} (1+\frac{\kappa}{2})( \partial_a \xi_{bc})^2 - \frac{2}{3} (1+\frac{\kappa}{2}) \partial_b h^{ba} \partial^c \xi_{ca}.
\ee
This is the diffeomorphism-invariant Lagrangian of the type (\ref{L-xi}) with (\ref{parameters}) and 
\be
\rho = 1+\frac{\kappa}{6}, \quad \mu = \frac{2}{3} \left( 1+\frac{\kappa}{2}\right).
\ee
This shows that the linearisation of our general action gives the linearisation of the Einstein-Hilbert Lagrangian for $\kappa=-2$. We can also write down what the Lagrangian (\ref{elliptic-gauge}) becomes with this choice of the parameters. We get
\be
{\mathcal L}=\frac{\kappa+6}{12} ( \partial_a h_{bc})^2 + \frac{(\kappa+6)(\kappa-2)}{96} ( \partial_a h)^2 \\ \nonumber
+\frac{(\kappa+6)(\kappa+2)}{192}( \partial_a \xi_{bc})^2 
- \frac{2}{3} (\partial^a (h_{ab} - \frac{2-\kappa}{8} \eta_{ab} h + \frac{2+\kappa}{4} \xi_{ab}))^2.
\ee
This shows that another interesting point in the theory space is $\kappa=2$, when there is no separate kinetic term for the trace of the metric in the linearised Lagrangian. The value $\kappa=-6$ is also special. As is clear from (\ref{C-T}), for this value of $\kappa$ the tensor $C_{abc}$ can no longer be solved for in terms of $T_{abc}$. All these special cases need to be studied further to understand their significance.

\section*{Acknowledgements} I am grateful to my students Niren Bhoja and Adam Shaw for many discussions related to the material described here, and to Sam Close for spotting a variation mistake in the first version of this paper that led to incorrect description of the $\kappa=-2$ case. I am also grateful to Shubham Dwivedi for answering some questions and useful references.

\end{document}